\numberwithin{equation}{section}
\newtheorem{theorem}{Theorem}[section]
\newtheorem{lemma}[theorem]{Lemma}
\newtheorem{proposition}[theorem]{Proposition}
\newtheorem{corollary}[theorem]{Corollary}
\theoremstyle{definition}
\newtheorem{definition}[theorem]{Definition} 
\newtheorem{procedure}[theorem]{Procedure} 
\newtheorem{remark}[theorem]{Remark}
\newtheorem{example}[theorem]{Example}
\begin{document}

%%% New Commands

\newcommand{\m}[1]{\marginpar{\addtolength{\baselineskip}{-3pt}{\footnotesize
\it #1}}}
\newcommand{\A}{\mathcal{A}}
\newcommand{\K}{\mathcal{K}}
\newcommand{\knd}{\mathcal{K}^{[d]}_n}
\newcommand{\F}{\mathcal{F}}
\newcommand{\N}{\mathbb{N}}
\newcommand{\pr}{\mathbb{P}}
\newcommand{\Z}{\mathbb{Z}}
\newcommand{\R}{\mathbb{R}}
\newcommand{\I}{\mathit{I}}
\newcommand{\G}{\mathcal{G}}
\newcommand{\D}{\mathcal{D}}
\newcommand{\x}{\mathbf{x}}
\newcommand{\lcm}{\operatorname{lcm}}
\newcommand{\ndp}{N_{d,p}}
\newcommand{\tor}{\operatorname{Tor}}
\newcommand{\reg}{\operatorname{reg}}
\newcommand{\mf}{\mathfrak{m}}

\def\bb{{{\rm \bf b}}}
\def\cc{{{\rm \bf c}}}

%%%%%%%%%%%%%%%%%%%%%%%%%%%%%%%%%%%%%%%%%%%%%%%%%%%%%%%%%%%%%%%%%%%%%

\title[]{Associated primes of powers of edge ideals}
\author{Jos\'e Mart\'{\i}nez-Bernal}
\address{
Departamento de
Matem\'aticas\\
Centro de Investigaci\'on y de Estudios Avanzados del
IPN\\
Apartado Postal
14--740 \\
07000 Mexico City, D.F. } \email{jmb@math.cinvestav.mx}
\thanks{The first and third authors were partially supported by SNI}

\author{Susan Morey}
\address{Department of Mathematics \\
Texas State University\\
601 University Drive\\ 
San Marcos, TX 78666.}
\email{morey@txstate.edu}
%\thanks{The second author was partially supported by ...}

\author{Rafael H. Villarreal}
\address{
Departamento de
Matem\'aticas\\
Centro de Investigaci\'on y de Estudios
Avanzados del
IPN\\
Apartado Postal
14--740 \\
07000 Mexico City, D.F.
}
\email{vila@math.cinvestav.mx}
%\urladdr{http://www.math.cinvestav.mx/$\sim$vila/}

\keywords{Associated primes, edge-ideal, integral
closure, perfect matching, analytic spread.} 
\subjclass[2000]{Primary 13C13, Secondary 13A30, 13F55, 05C25, 05C75.} 

\begin{abstract} Let $G$ be a graph and let $I$ be its
 edge ideal. Our main result shows that the sets of associated primes
 of the powers of 
 $I$ form an ascending chain. It is known that the sets of associated
 primes of $I^i$ and $\overline{I^i}$ stabilize for large $i$. We
 show that their corresponding stable sets are equal.
To show 
 our main result we use a classical result of Berge from matching
 theory and certain notions from combinatorial optimization. 
\end{abstract}
\maketitle
\section{Introduction}

Let $G$ be a simple {\it graph\/} with finite vertex set 
$X=\{x_1,\ldots,x_n\}$, i.e., $G$ is the set $X$ together with a family of 
subsets of $X$ of cardinality $2$, called edges, none 
of which is included in another. The sets of
vertices and edges of $G$ are denoted by $V(G)$
and $E(G)$ respectively. We shall always assume that $G$ has no
isolated vertices, i.e., every vertex of $G$ has to occur in at least
one edge. Let $R=K[x_1,\ldots,x_n]$ be a polynomial ring 
over a field $K$. The {\it edge ideal\/} of $G$, 
denoted by $I=I(G)$, is the ideal of $R$
generated by all square-free monomials $x_ix_j$ such that
$\{x_i,x_j\}\in E(G)$. 
The assignment $G\mapsto
I(G)$ gives a natural one to one
correspondence between the family of graphs and the family 
of monomial ideals generated by square-free monomials of degree $2$.

Let $G$ be a graph and let $I=I(G)$ be its edge ideal. In this paper
we will examine the sets of 
associated primes of the powers of $I$, that is, the sets
$${\rm Ass}(R/I^k)=\{{\mathfrak p}\subset R \, | \, {\mathfrak p} \
{\mbox{\rm is prime and }} 
{\mathfrak p}=(I^k:c)\ 
{\mbox{\rm for some }} c\in R\},\ \ k\geq 1.$$ 
Since $I$ is a monomial ideal of a polynomial ring $R$, the
associated primes 
will be monomial primes, which are primes that are 
generated by subsets of the variables, see
\cite[Proposition~5.1.3]{monalg}. 
The associated primes of $I$
correspond to 
minimal vertex covers of the graph $G$ and ${\rm Min}(R/I)={\rm
Ass}(R/I)$,  where ${\rm Min}(R/I)$ denotes the
set of minimal primes of $I$, see \cite{Vi2}. For edge ideals, ${\rm
Ass}(R/I)\subset {\rm Ass}(R/I^k)$ for all positive integers $k$. In
the case where 
equality holds for all $k$, the ideal $I$ is said to 
be {\em normally torsion-free}.

In \cite{brod}, Brodmann showed that the sets ${\rm Ass}(R/I^k)$ stabilize
for large $k$. That is, there exists a positive integer $N_1$ such that
${\rm Ass}(R/I^k)={\rm Ass}(R/I^{N_1})$ for all $k \geq N_1$. A minimal
such $N_1$ is called the {\em index of stability} of $I$. One important
result in this area establishes that $N_1=1$ if and only if $G$ is a
bipartite graph \cite[Theorem~5.9]{ITG}. A useful upper bound for
$N_1$ was 
shown in \cite[Corollary~4.3]{AJ}, namely that if $G$ is a connected
non-bipartite graph with $n$ vertices, $s$ leaves, and the smallest odd
cycle of $G$ has length $2k+1$, then  $N_1 \leq n-k-s$. We make use of
this upper bound in Example~\ref{intcl1.m2}.

Although the sets ${\rm Ass}(R/I^k)$ are known to stabilize for large
$k$, their behavior for small $k$ can be erratic. Finding the stable
set ${\rm Ass}(R/I^{N_1})$ is complicated by the fact that a prime
ideal $\mathfrak{p}$ that is associated to a low power of an ideal
$I$ need not be associated 
to higher powers. For example, \cite[Example,~p.~2]{McAdam} gives an example,
due to A. Sathaye, of
an ideal $I$ in a ring $R$ and a prime $\mathfrak{p}$ for which
$\mathfrak{p} \in {\rm Ass}(R/I^k)$ for $k$ 
even and $\mathfrak{p} \not\in {\rm Ass}(R/I^k)$ for $k$ odd for all
$k$ below a stated bound. When,
for an ideal $I$, $\mathfrak{p} \in {\rm Ass}(R/I^k)$ implies
$\mathfrak{p}\in {\rm 
  Ass}(R/I^{k+1})$ for all $k \geq 1$, one says that the sets ${\rm
  Ass}(R/I^k)$ form an {\it ascending chain\/}. Although this property is
highly desirable, few classes of ideals are known to possess it. Examples
of monomial ideals for which the sets ${\rm Ass}(R/I^k)$ do not form
ascending chains can be found in \cite[Section~4]{HH} (stated in terms
of depths), and \cite[Example~4.18]{edge-ideals}.

Let $\overline{I^k}$ denote the integral closure of $I^k$. An ideal
$I$ is called {\it normal} if $I^k=\overline{I^k}$ for all
$k\geq 1$. By results of
Ratliff \cite{ratliff,ratliff-increasing}, one has that the sets ${\rm
Ass}(R/{\overline{I^k}})$ form an ascending chain which stabilizes for
large $k$. Thus, there exists $N_2$ such that ${\rm Ass}(R/\overline{I^k})={\rm
Ass}(R/\overline{I^{N_2}})$ for $k\geq N_2$. The set ${\rm
Ass}(R/\overline{I^{N_2}})$ is nicely described in \cite{mcadam}, and
for edge ideals of graphs the set  ${\rm Ass}(R/I^{N_1})$ is
described in \cite{AJ}.   

Our main result is:

\noindent {\bf Theorem~\ref{persistence-edge-ideals}}{\it\ If $I$ is
  the edge ideal of a graph, then ${\rm 
Ass}(R/I^k) \subset{\rm Ass}(R/I^{k+1})$ for all $k$. That 
is, the sets of associated primes of the powers of $I$ form an
ascending chain.
}

There are two cases where the sets of associated primes of a
square-free monomial ideal are known to form an ascending chain. The
first case is the family of normal ideals (as was pointed out above),
which includes, for instance, ideals of vertex covers of perfect graphs
\cite{persistence-dm,FHV,perfect}. The second case
is the family of graphs with at  
least one leaf \cite{edge-ideals}, which is now a particular case of
our main result.   

In a more general setting, i.e., when $I\neq(0)$ is an ideal of a
commutative 
Noetherian domain, Ratliff showed that $(I^{k+1}\colon I)=I^k$ for all
large $k$ \cite[Corollary~4.2]{ratliff} and that equality holds for
all $k$ when $I$ is normal \cite[Proposition~4.7]{ratliff}. We show
that equality holds for 
all $k$ when $I$ is an edge ideal. 

\noindent {\bf Lemma~\ref{jan29-2011}}{\it\ $(I^{k+1}\colon I)=I^k$
  for $k\geq 1$.} 

This lemma is central to the proof of our main result. 
To show this lemma, we need to link the algebraic and
combinatorial data. This is achieved using matching theory 
and basic notions from combinatorial optimization. 

Given an edge $f$, we denote by $G^f$ the
graph obtained from $G$ by duplicating the two vertices of $f$ (see
Definition~\ref{parallelization-def}). The {\it deficiency} of $G$,
denoted by ${\rm def}(G)$, is the number of vertices left uncovered
by any maximum matching of $G$. The matching number of $G$ is denoted
by $\nu(G)$ (see Definition~\ref{matching-number}). Using a formula of Berge (see
Theorem~\ref{berge-formula}), we compare the deficiencies of $G$ and $G^f$. 

Our main combinatorial result is:

\noindent {\bf Theorem~\ref{pepe-vila-berge}}{\it\ ${\rm
def}(G^f)=\delta$ for all $f\in E(G)$ 
if and only if ${\rm
def}(G)=\delta$ and $\nu(G^f)=\nu(G)+1$ for all $f\in E(G)$.}

As a byproduct, we present the following
characterization of graphs with a perfect matching. 

\noindent {\bf Corollary~\ref{pepe-vila}}{\it\ $G$ has a perfect matching if 
and only if $G^f$ has a perfect matching for every edge $f$ of $G$.}

In general, for ideals in commutative Noetherian rings, 
${\rm Ass}(R/\overline{I^{N_2}})$ is a subset of
${\rm Ass}(R/I^{N_1})$, see \cite[Proposition~3.17]{McAdam}. We show that
for edge ideals these stable sets are equal. 

\noindent {\bf Theorem~\ref{ass=assic}}{\it\ ${\rm Ass}(R/I^k)={\rm
Ass}(R/\overline{I^k})$ for $k\geq 
\max\{N_1,N_2\}$.}

As an application we show that an edge ideal $I$ is normally
torsion-free if and only if  
${\rm Ass}(R/\overline{I^i})={\rm Ass}(R/I)$ for $i\geq 1$ (see
Corollary~\ref{ntf-ass-corollary}). 

Throughout the paper we introduce most of the 
notions that are relevant for our purposes. For unexplained
terminology we refer to \cite{diestel,Mats,bookthree}. Two excellent
references for the 
general theory of asymptotic prime divisors in commutative Noetherian
rings are \cite{huneke-swanson-book} and \cite{McAdam}.

\section{Perfect matchings and persistence of associated primes}

In this section we give a characterization of graphs with a perfect
matching and show that the sets of associated primes of powers of an
edge ideal form an ascending chain. We continue using the
definitions and terms from the introduction.

Let $G$ be a graph with vertex set $X=\{x_1,\ldots,x_n\}$ and let 
$I=I(G)\subset R$ be its edge ideal. In what follows
$F=\{f_1,\ldots,f_q\}$
denotes the set of all monomials $x_ix_j$ such that $\{x_i,x_j\}\in
E(G)$. As usual, we use 
$x^a$ as an 
abbreviation for $x_1^{a_1} \cdots x_n^{a_n}$ and we set
$|a|=a_1+\cdots+a_n$, where $a=(a_i)\in \mathbb{N}^n$. For convenience
we will consider $0$ to be an element of $\mathbb{N}$. We also use
$f^c$ as an abbreviation 
for $f_1^{c_1}\cdots f_q^{c_q}$, where $c=(c_i)\in\mathbb{N}^q$.

\begin{definition}\label{parallelization-def}\rm Following Schrijver
\cite{Schr2}, the {\it
duplication\/} of a vertex $x_i$  
of a graph $G$ means extending its vertex set $X$ by a
new vertex $x_i'$ 
and replacing
$E(G)$ by
$$    
E(G)\cup\{(e\setminus\{x_i\})\cup\{x_i'\}\vert\, x_i\in e\in
E(G)\}.
$$
The {\it deletion\/} of $x_i$, denoted by
$G\setminus\{x_i\}$,  is the graph formed from
$G$ by deleting the vertex $x_i$ and all edges containing
$x_i$. A graph obtained from $G$ by a sequence of
deletions and 
duplications of vertices is called a {\it parallelization\/} of $G$. 
\end{definition} 

It is not difficult to verify that these two operations commute.
If $a=(a_i)$ is a vector in $\mathbb{N}^n$, we denote by $G^a$
the graph obtained from
$G$ by successively deleting any vertex $x_i$ with $a_i=0$ and
duplicating $a_i-1$ times any vertex $x_i$ if $a_i\geq 1$ 
(cf. \cite[p.~53]{golumbic}). The notion of a parallelization was used
in \cite{cm-mfmc,symboli} to describe the symbolic Rees algebra of an
edge ideal. This notion has its origin in combinatorial optimization
and has been used to describe the max-flow min-cut property of
clutters \cite{covers,Schr2}.

\begin{example}\label{april9-09} Let $G$ be the graph of Fig. 1 
and let $a=(3,3)$. We set $x_i^1=x_i$ for $i=1,2$. The parallelization 
$G^a$ is a complete bipartite graph 
with bipartition $V_1=\{x_1^1,x_1^2,x_1^3\}$ and
$V_2=\{x_2^1,x_2^2,x_2^3\}$. Note that $x_i^k$ is a vertex, i.e., $k$
is an index not an exponent.
\vspace{0.8cm}
$$
\begin{array}{cccc}
\setlength{\unitlength}{.04cm}
\thicklines
\begin{picture}(80,35)
\put(0,10){\circle*{3.1}}
\put(0,40){\circle*{3.1}}
\put(-15,42){$x_1$}
\put(-15,3){$x_2$}
\put(0,10){\line(0,1){30}}
\put(10,15){$G$}
\put(-20,-15){\mbox{Fig. 1. Graph}}
\end{picture}
&
\setlength{\unitlength}{.04cm}
\thicklines
\begin{picture}(80,35)
\put(30,40){\circle*{3.1}}
\put(60,40){\circle*{3.1}}
\put(0,40){\circle*{3.1}}
\put(-20,-15){\mbox{Fig. 2. Duplications of $x_1$}}
\put(0,10){\circle*{3.1}}
\put(-15,42){$x_1^1$}
\put(18,42){$x_1^2$}
\put(45,42){$x_1^3$}
\put(-15,3){$x_2^1$}
\put(0,10){\line(0,1){30}}
\put(0,10){\line(1,1){30}}
\put(0,10){\line(2,1){60}}
\put(35,15){$G^{(3,1)}$}
\end{picture}
&
\ \ \ \ \ \ \ \ \ \ 
&
\setlength{\unitlength}{.04cm}
\thicklines
\begin{picture}(80,35)
\put(0,10){\circle*{3.1}}
\put(-15,42){$x_1^1$}
\put(18,42){$x_1^2$}
\put(45,42){$x_1^3$}
\put(-15,3){$x_2^1$}
\put(18,3){$x_2^2$}
\put(45,3){$x_2^3$}
\put(0,10){\line(0,1){30}}
\put(0,10){\line(1,1){30}}
\put(0,10){\line(2,1){60}}
\put(30,10){\circle*{3.1}}
\put(30,10){\line(0,1){30}}
\put(30,10){\line(1,1){30}}
\put(30,10){\line(-1,1){30}}
\put(60,10){\circle*{3.1}}
\put(60,10){\line(0,1){30}}
\put(60,10){\line(-2,1){60}}
\put(60,10){\line(-1,1){30}}
\put(0,40){\circle*{3.1}}
\put(30,40){\circle*{3.1}}
\put(60,40){\circle*{3.1}}
\put(70,15){$G^{(3,3)}$}
\put(-25,-15){\mbox{Fig. 3. Duplications of $x_1$ and $x_2$}}
\end{picture}
\end{array}
$$
\end{example}

\bigskip

\begin{definition}\label{matching-number} Two edges of $G$ are {\it
independent} if they do not intersect. A {\it matching\/} of $G$ is a
set of pairwise 
independent edges. The {\it matching number\/} of
$G$, denoted by $\nu(G)$, is the size of any maximum matching of $G$.
A matching that covers all the vertices of 
$V(G)$  is called a {\it perfect matching} of $G$. 
\end{definition}

A very readable and comprehensive reference about matchings in finite
graphs is the book of Lov\'asz and Plummer \cite{matching-theory}. 

Given a graph $G$, the {\it edge-subring\/} of $G$ is the 
subring $K[G]=K[x_ix_j\vert\, \{x_i,x_j\}\in E(G)]$.

\begin{lemma}\label{multiset-perfect-matching} Let $G$ be a graph
with vertex set 
$X=\{x_1,\ldots,x_n\}$ and let $a=(a_1,\ldots,a_n)\in\mathbb{N}^n$.
Then $G^a$ has a perfect matching if and only if $x^a\in K[G]$. 
\end{lemma}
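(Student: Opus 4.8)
The plan is to reduce both conditions to a single combinatorial statement: that $x^a$ can be written as a product, with repetitions allowed, of the edge generators $f_1,\dots,f_q$; equivalently that $x^a=f^c$ for some $c\in\mathbb{N}^q$. This reduction is immediate from the fact that $K[G]$ is a monomial subalgebra: being generated over $K$ by the monomials $f_1,\dots,f_q$, it is spanned as a $K$-vector space by the monomials $f^c$ with $c\in\mathbb{N}^q$, and since distinct monomials are linearly independent over $K$, a monomial $x^a$ lies in $K[G]$ precisely when $x^a=f^c$ for some $c$. (In particular $|a|=2\sum_j c_j$ is then even, as it must be for $G^a$ to admit a perfect matching.) So it suffices to prove that $G^a$ has a perfect matching if and only if $x^a=f^c$ for some $c\in\mathbb{N}^q$.

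\emph{($\Leftarrow$).} Suppose $x^a=f^c$. View this as a multiset $\mathcal{E}$ of edges of $G$ in which the edge corresponding to $f_j$ occurs $c_j$ times; then $\mathcal{E}$ has $\sum_j c_j$ edges and hence $|a|$ edge-ends. For each vertex $x_i$ of $G$, the number of edge-ends of $\mathcal{E}$ incident to $x_i$ equals the exponent of $x_i$ in $f^c=x^a$, namely $a_i$, and $a_i$ is exactly the number of copies $x_i^1,\dots,x_i^{a_i}$ of $x_i$ appearing in $G^a$ (when $a_i=0$, $x_i$ is deleted and contributes nothing). Fix, for each $i$ with $a_i\geq 1$, a bijection from the set of edge-ends of $\mathcal{E}$ at $x_i$ onto $\{x_i^1,\dots,x_i^{a_i}\}$, and in every edge of $\mathcal{E}$ replace each of its two endpoints by the copy assigned to the corresponding edge-end. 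Each resulting pair is an edge of $G^a$, because its underlying edge lies in $E(G)$, and together these $\sum_j c_j$ edges cover every vertex of $G^a$ exactly once; that is, they form a perfect matching.

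\emph{($\Rightarrow$).} Conversely, let $M$ be a perfect matching of $G^a$. For each edge $\{x_i^k,x_j^l\}\in M$, forget the copy labels to obtain the monomial $x_ix_j$, a generator of $K[G]$, and form the product of these monomials over all edges of $M$. Since $M$ covers each vertex $x_i^k$ of $G^a$ exactly once, the exponent of $x_i$ in this product equals the number of copies of $x_i$, which is $a_i$; hence the product equals $x^a$, and therefore $x^a\in K[G]$. The whole argument is routine bookkeeping, and the only point that requires a moment's care is the choice of bijections in the ($\Leftarrow$) direction — available precisely because the multiplicity of $x_i$ in the factorization of $x^a$ matches the number of duplicates of $x_i$ created in $G^a$. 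I do not anticipate any genuine obstacle, since the lemma is essentially a dictionary between the monomial algebra $K[G]$ and the combinatorics of the parallelization $G^a$.
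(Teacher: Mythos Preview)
Your proof is correct and follows essentially the same approach as the paper: both arguments set up a bijection between the copies $x_i^1,\dots,x_i^{a_i}$ of $x_i$ in $G^a$ and the occurrences of $x_i$ in a factorization $x^a=f^c$, then use this bijection to pass between perfect matchings of $G^a$ and products of edge monomials. The only cosmetic differences are that the paper writes down one explicit bijection via the ordered multiset $X_a$ (whereas you choose arbitrary bijections), and the paper first reduces to the case $a_i\geq 1$ while you handle $a_i=0$ inline.
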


\begin{proof} We may assume that $a_i\geq 1$ for all $i$, because if
$a$ has zero entries we can use the induced subgraph on the vertex 
set $\{x_i\vert\, a_i>0\}$. The vertex set of $G^a$ is
$$
X^a=\{x_1^1,\ldots,x_1^{a_1},\ldots,x_i^1,\ldots,x_i^{a_i},
\ldots,x_n^1,\ldots,x_n^{a_n}\}
$$
and the edges of $G^a$ are exactly those pairs of the form 
$\{x_{i}^{k_{i}},x_{j}^{k_{j}}\}$ with $i\neq j$, $k_{i}\leq
a_{i}$, $k_j\leq a_j$, for some edge $\{x_i,x_j\}$ of $G$. We can
regard $x^a$ as an ordered multiset 
$$
x^a=x_1^{a_1}\cdots x_n^{a_n}=(\underbrace{x_1\cdots x_1}_{a_1})\cdots
(\underbrace{x_n\cdots x_n}_{a_n})
$$ 
on the set $X$, that is, we can identify the monomial $x^a$ with the multiset  
$$X_a=\{\underbrace{x_1,\ldots,x_1}_{a_1},\ldots,
\underbrace{x_n,\ldots,x_n}_{a_n}\}
$$ 
on $X$ in which each variable is uniquely identified with an integer
between $1$ and $|a|$. This 
integer is the position, from left to right, of $x_i$ in $X_a$. There
is a bijective map 
$$
\begin{array}{ccccccccccc}
1&2&\cdots&a_1&a_1+1&\cdots&a_1+a_2&\cdots&a_1+\cdots+a_{n-1}+1&\cdots&
a_1+\cdots+a_n\\
\downarrow&\downarrow&\cdots&\downarrow&\downarrow&\cdots&\downarrow
&\cdots&\downarrow&\cdots&\downarrow\\
x_1&x_1&\cdots&x_1&x_2&\cdots&x_2&\cdots&x_n&\cdots&x_n\\
\downarrow&\downarrow&\cdots&\downarrow&\downarrow&\cdots&\downarrow
&\cdots&\downarrow&\cdots&\downarrow\\
x_1^1&x_1^2&\cdots&x_1^{a_1}&x_2^1&\cdots&x_2^{a_2}&\cdots&x_n^1&\cdots&x_n^{a_n}.\\
\end{array}
$$
Hence if $G^a$ has a perfect matching, then the perfect matching
induces a 
factorization of $x^a$ in which each factor corresponds to an
edge of $G$, i.e., $x^a\in K[G]$. Conversely, if $x^a\in K[G]$
we can factor $x^a$ as a product of monomials corresponding to edges
of $G$ and this factorization induces a perfect matching of $G^a$. 
\end{proof}

Note that the process of passing from the vertex set $X$ to the
set $X^a$ and the multiset $X_a$ used in the lemma above can also be
used to view a 
general monomial as a square-free monomial in a polynomial ring with
additional variables. This is referred to as {\em  polarization} in
the literature. The copies of $x_i$ that are used are called shadows
of $x_i$. Conversely, a square-free monomial $M$ in the ring $K[X^a]$
can be viewed as a monomial in the ring $K[X]$ by setting the
exponent of $x_i$ to be the number of shadows of $x_i$ that divide
$M$. This process is called {\em depolarization}.     
 
Given an edge $f=\{x_i,x_j\}$ of a graph $G$, we denote by $G^f$ or
$G^{\{x_i,x_j\}}$ the graph obtained
from $G$ by successively duplicating the vertices $x_i$ and $x_j$,
i.e., $G^f:=G^{\mathbf{1}+e_i+e_j}$, where $e_i$ is the $i${\it th} unit
vector in $\mathbb{R}^n$ and $\mathbf{1}=(1,\ldots,1)$. 

\begin{example} Consider the graph $G$ of Fig. 4, where vertices
are labeled with $i$ instead of $x_i$. The duplication of the
vertices $x_1$ and 
$x_2$ of $G$ is shown in Fig. 6.
$$
\begin{array}{ccccc}
\setlength{\unitlength}{.040cm} \thicklines
\begin{picture}(0,50)(120,20)
\put(0,0){\circle*{4.2}} \put(60,0){\circle*{4.2}} \put(0,30){\circle*{4.2}}
\put(30,60){\circle*{4.2}} \put(60,30){\circle*{4.2}}\put(30,30){\circle*{4.2}}
\put(30,15){\circle*{4.2}}

\put(0,0){\line(1,0){60}}\put(0,0){\line(0,1){30}}\put(0,0){\line(2,1){30}}
\put(60,0){\line(0,1){30}}
\put(0,30){\line(1,1){30}}\put(60,30){\line(-1,1){30}}\put(30,15){\line(0,1){15}}
\put(30,30){\line(0,1){30}}\put(60,0){\line(-2,1){30}}

\newcommand{\lb}[1]{\tiny $#1$}
\put(-6,0){\lb{4}} \put(64,0){\lb{3}} \put(-6,28){\lb{5}} \put(29,63){\lb{1}}
\put(64,28){\lb{2}}\put(24,28){\lb{6}} \put(24,15){\lb{7}}
\put(8,-20){Fig. 4.
$G$}
\end{picture}
& \ \ &
\setlength{\unitlength}{.04cm} \thicklines
\begin{picture}(0,50)(30,20)
\put(0,0){\circle*{4.2}}
\put(60,0){\circle*{4.2}} \put(0,30){\circle*{4.2}}
\put(30,60){\circle*{4.2}} \put(60,30){\circle*{4.2}}\put(30,30){\circle*{4.2}}
\put(30,15){\circle*{4.2}}\put(20,40){\circle*{4.2}}

\put(0,0){\line(1,0){60}}\put(0,0){\line(0,1){30}}\put(0,0){\line(2,1){30}}
\put(60,0){\line(0,1){30}}
\put(0,30){\line(1,1){30}}\put(60,30){\line(-1,1){30}}\put(30,15){\line(0,1){15}}
\put(30,30){\line(0,1){30}}\put(60,0){\line(-2,1){30}}\put(20,40){\line(-2,-1){20}}
\put(20,40){\line(1,-1){10}}\put(20,40){\line(4,-1){40}}

\newcommand{\lb}[1]{\tiny $#1$}
\put(-6,0){\lb{4}} \put(64,0){\lb{3}} \put(-6,28){\lb{5}} \put(29,63){\lb{1}}
\put(64,28){\lb{2}}\put(24,28){\lb{6}} \put(24,15){\lb{7}}\put(22,42){\lb{1'}}
\put(-12,-20){Fig. 5. $G^{(2,1,1,1,1,1,1)}$}
\end{picture}

& &\ \ \setlength{\unitlength}{.04cm} \thicklines
\begin{picture}(0,50)(-60,20)
\put(0,0){\circle*{4.2}} \put(60,0){\circle*{4.2}} \put(0,30){\circle*{4.2}}
\put(30,60){\circle*{4.2}} \put(60,30){\circle*{4.2}}\put(30,30){\circle*{4.2}}
\put(30,15){\circle*{4.2}}\put(20,40){\circle*{4.2}}

\put(0,0){\line(1,0){60}}\put(0,0){\line(0,1){30}}\put(0,0){\line(2,1){30}}
\put(60,0){\line(0,1){30}}
\put(0,30){\line(1,1){30}}\put(60,30){\line(-1,1){30}}\put(30,15){\line(0,1){15}}
\put(30,30){\line(0,1){30}}\put(60,0){\line(-2,1){30}}\put(20,40){\line(-2,-1){20}}
\put(20,40){\line(1,-1){10}}\put(20,40){\line(4,-1){40}}

\newcommand{\lb}[1]{\tiny $#1$}
\put(-6,0){\lb{4}} \put(64,0){\lb{3}} \put(-6,28){\lb{5}} \put(29,63){\lb{1}}
\put(64,28){\lb{2}}\put(24,28){\lb{6}} \put(24,15){\lb{7}}\put(22,41){\lb{1'}}
\put(-21,-20){Fig. 6. $G^{(2,2,1,1,1,1,1)}=G^{\{x_1,x_2\}}$}

\put(43.5,40){\circle*{4.2}}\put(44,40){\line(-1,0){24}}\put(43,40){\line(-2,3){12}}
\put(44,40){\line(2,-5){17}}\put(33,41){\lb{2'}}
\end{picture}

\end{array}
$$
\end{example}

\vspace{1.5cm}

Recall that ${\rm def}(G)$, the {\it
deficiency\/} of $G$, is given by ${\rm def}(G)=|V(G)|-2\nu(G)$, 
where $\nu(G)$ is the matching number of $G$. Hence ${\rm def}(G)$ is
the number of vertices left uncovered by any maximum matching.

\begin{lemma}\label{feb9-2011} Let $G$ be a graph and 
let $a\in\mathbb{N}^n$ and $c\in\mathbb{N}^q$. Then 
\begin{itemize}
\item[{(a)}] $x^a=x^\delta f^c$, 
where $|\delta|={\rm def}(G^a)$ and $|c|=\nu(G^a)$.
\item[{(b)}] $x^a$ belongs to $I(G)^k\setminus
I(G)^{k+1}$ if and only if $k=\nu(G^a)$.
\item[{(c)}] $(G^a)^f=(G^a)^{\{x_i,x_j\}}$
for any edge $f=\{x_i^{k_i},x_j^{k_j}\}$ of $G^a$.
\end{itemize}
\end{lemma}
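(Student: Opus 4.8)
The plan is to derive all three parts from the polarization dictionary underlying Lemma~\ref{multiset-perfect-matching}, under which the monomial $x^a$ is identified with the multiset $X_a$ on $X$, and the vertex set $X^a$ of $G^a$ consists of the shadows of the variables: the shadows $x_i^1,\dots,x_i^{a_i}$ of $x_i$ are precisely the $a_i$ copies of $x_i$ occurring in $X_a$. Under this identification an edge $\{x_i^{p},x_j^{q}\}$ of $G^a$ depolarizes to the edge monomial $x_ix_j\in F$, a matching of $G^a$ depolarizes to a product of such monomials dividing $x^a$, and conversely.

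For (a), I would start from a maximum matching $M$ of $G^a$, so $|M|=\nu(G^a)$ and $M$ leaves exactly ${\rm def}(G^a)=|V(G^a)|-2\nu(G^a)=|a|-2\nu(G^a)$ vertices uncovered. The endpoints of the edges of $M$ form a sub-multiset of $X^a$; depolarizing, each edge of $M$ gives an element of $F$, and the product of these $\nu(G^a)$ edge monomials is a monomial $f^c$ with $|c|=\nu(G^a)$ dividing $x^a$. Setting $x^\delta=x^a/f^c$ — equivalently, $x^\delta$ is the depolarization of the vertices of $G^a$ left uncovered by $M$ — yields $x^a=x^\delta f^c$ with $|\delta|=|a|-2|c|={\rm def}(G^a)$.

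For (b), the basic remark is that $x^a\in I(G)^k$ exactly when some product $g_1\cdots g_k$ of edge monomials of $G$ (repetitions allowed) divides $x^a$. If this holds, put $x^b=g_1\cdots g_k$; then $x^b\in K[G]$ and $b\le a$ coordinatewise, so by Lemma~\ref{multiset-perfect-matching} the graph $G^b$ has a perfect matching, consisting of $k=|b|/2$ edges, and since $b\le a$ the graph $G^b$ embeds in $G^a$ as the subgraph on the first $b_i$ shadows of each $x_i$; hence $\nu(G^a)\ge k$. Conversely, applying the construction of part (a) to any matching of size $k\le\nu(G^a)$ produces a divisor $f^c$ of $x^a$ with $|c|=k$, so $x^a\in I(G)^k$. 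Thus $\{\,k : x^a\in I(G)^k\,\}=\{0,1,\dots,\nu(G^a)\}$, which gives $x^a\in I(G)^k\setminus I(G)^{k+1}$ if and only if $k=\nu(G^a)$.

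For (c), the key structural fact is that the shadows $x_i^1,\dots,x_i^{a_i}$ of a vertex $x_i$ are twins in $G^a$: each has neighborhood $\{\,x_j^m : \{x_i,x_j\}\in E(G),\ 1\le m\le a_j\,\}$, and no two of them are adjacent. Hence duplicating the single vertex $x_i^{k_i}$ in $G^a$ creates a vertex with exactly that neighborhood — one more shadow of $x_i$ — so the result is $G^{a+e_i}$, independent of the choice of $k_i$; likewise duplicating $x_j^{k_j}$ alone gives $G^{a+e_j}$. Since deletions and duplications of vertices commute, for an edge $f=\{x_i^{k_i},x_j^{k_j}\}$ of $G^a$ the graph $(G^a)^f$, which duplicates both endpoints of $f$, equals $G^{a+e_i+e_j}$, a graph depending only on the edge $\{x_i,x_j\}$ of $G$ lying below $f$; this common graph is $(G^a)^{\{x_i,x_j\}}$. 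The only delicate bookkeeping is the subgraph inclusion $G^b\subseteq G^a$ for $b\le a$ in part (b) and tracking which shadows a matching covers in part (a); neither is a real obstacle, since the substance is carried entirely by Lemma~\ref{multiset-perfect-matching} and by the twin structure of the shadows.
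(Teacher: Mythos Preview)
Your proposal is correct and follows essentially the same approach as the paper: parts (a) and (b) are derived from the polarization/depolarization correspondence of Lemma~\ref{multiset-perfect-matching}, exactly as the paper indicates, and your part (c) via the twin structure of the shadows is a clean repackaging of the paper's direct edge-by-edge verification that $E((G^a)^f)=E((G^a)^{\{x_i,x_j\}})$.
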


\begin{proof} Parts (a) and (b) follow using the bijective map used
in the proof of 
Lemma~\ref{multiset-perfect-matching}. To show (c) we use the
notation used in the proof of Lemma~\ref{multiset-perfect-matching}.
We now prove the inclusion $E((G^a)^f)\subset
E((G^a)^{\{x_i,x_j\}})$. Let $y_i$ 
and $y_j$ be the duplications 
of $x_i^{k_i}$ and $x_j^{k_j}$ respectively. We also denote the
duplications of $x_i$ and $x_j$ by $y_i$ and $y_j$ respectively. The common
vertex set of $(G^a)^f$ and $(G^a)^{\{x_i,x_j\}}$ is
$V(G^a)\cup\{y_i,y_j\}$. Let $e$ be an edge of $(G^a)^f$. If
$e=\{y_i,y_j\}$ or $e\cap\{y_i,y_j\}=\emptyset$, then clearly $e$ is
an edge of $(G^a)^{\{x_i,x_j\}}$. Thus, we may assume that
$e=\{y_i,x_\ell^{k_\ell}\}$. Then $\{x_i^{k_i},x_\ell^{k_\ell}\}\in
E(G^a)$, so $\{x_i,x_\ell\}\in E(G)$. Hence $\{x_i,x_\ell^{k_\ell}\}$
is in $E(G^a)$, so $e=\{y_i,x_\ell^{k_\ell}\}$ is an edge of
$(G^a)^{\{x_i,x_j\}}$. This proves the inclusion ``$\subset$''. The
other inclusion follows using similar arguments (arguing backwards). 
\end{proof}

\begin{theorem}{\rm (Berge; see
\cite[Theorem~3.1.14]{matching-theory})}\label{berge-formula} Let $G$
be a graph. Then 
$${\rm def}(G)=\max\{c_0(G\setminus S)-|S|\, \vert\, S\subset V(G)\},$$
where $c_0(G)$ denotes the number of odd components $($components
with an odd number of vertices$)$ of a graph $G$.
\end{theorem}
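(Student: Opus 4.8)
\medskip

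\noindent\textbf{Proof proposal.}
The content of the statement is the inequality
$${\rm def}(G)\ \ge\ c_0(G\setminus S)-|S|\qquad\text{for every }S\subseteq V(G),$$
together with the existence of an $S$ attaining equality; write $d(G):=\max\{c_0(G\setminus S)-|S|\mid S\subseteq V(G)\}$, so the claim is ${\rm def}(G)=d(G)$. For the inequality ${\rm def}(G)\ge d(G)$ I would fix $S\subseteq V(G)$ and a maximum matching $M$ of $G$, and let $C_1,\dots,C_m$ be the odd components of $G\setminus S$, $m=c_0(G\setminus S)$. Since $|V(C_i)|$ is odd, the edges of $M$ with both ends in $C_i$ saturate an even number of vertices of $C_i$, so some $v_i\in C_i$ is unmatched by $M$ inside $C_i$; as $G$ has no edge between two distinct components of $G\setminus S$, this $v_i$ is either $M$-unsaturated or matched by $M$ into $S$. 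At most $|S|$ of the $v_i$ can be of the latter type (the $v_i$ being distinct and matched to distinct vertices of $S$), so at least $m-|S|$ of them are $M$-unsaturated and distinct, giving ${\rm def}(G)=|V(G)|-2\nu(G)\ge m-|S|$. Taking the maximum over $S$ yields ${\rm def}(G)\ge d(G)$. I would also record here the parity remark $c_0(G\setminus S)-|S|\equiv|V(G)|\pmod 2$ (since $c_0(G\setminus S)\equiv|V(G)|-|S|$), so $d(G)\equiv|V(G)|\pmod 2$, and $d(G)\ge c_0(G)\ge 0$ by taking $S=\emptyset$.

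For the reverse inequality ${\rm def}(G)\le d(G)$ the plan is to reduce to Tutte's $1$-factor theorem (see \cite{matching-theory}): a graph $H$ with $|V(H)|$ even has a perfect matching iff $c_0(H\setminus T)\le|T|$ for all $T\subseteq V(H)$. Set $d:=d(G)$ and form the graph $G'$ on vertex set $V(G)\sqcup W$, where $W$ is a set of $d$ new vertices (possibly empty), by adding all edges inside $W$ and all edges joining $W$ to $V(G)$; by the parity remark, $|V(G')|=|V(G)|+d$ is even. \textbf{The crux of the argument is to verify that $G'$ satisfies Tutte's condition.} Let $T\subseteq V(G')$. If some $w\in W\setminus T$, then $w$ is adjacent in $G'\setminus T$ to every other remaining vertex, so $G'\setminus T$ is connected or empty and $c_0(G'\setminus T)\le 1$, with equality only if $|V(G')|-|T|$ is odd, in which case $|T|$ is odd and hence $|T|\ge 1=c_0(G'\setminus T)$. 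If instead $W\subseteq T$, write $T=S\sqcup W$ with $S=T\cap V(G)$; then $G'\setminus T=G\setminus S$, so $c_0(G'\setminus T)=c_0(G\setminus S)\le d+|S|=|T|$ by the definition of $d$. In all cases $c_0(G'\setminus T)\le|T|$, so Tutte's theorem provides a perfect matching $M'$ of $G'$. At most $d$ edges of $M'$ meet $W$, so they cover at most $d$ vertices of $V(G)$; the remaining edges of $M'$ lie in $G$ and cover at least $|V(G)|-d$ vertices, whence $2\nu(G)\ge|V(G)|-d$, i.e. ${\rm def}(G)\le d=d(G)$. Combined with the first part this gives ${\rm def}(G)=d(G)$.

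I expect the only delicate point to be the case analysis verifying Tutte's condition for the cone-like graph $G'$; the rest is bookkeeping with matchings. If a fully self-contained treatment is wanted, Tutte's theorem can be inserted via the standard induction on a maximal counterexample, or the entire statement can be read off the Gallai--Edmonds structure theorem (see \cite{matching-theory}); invoking Tutte's theorem keeps the proof short.
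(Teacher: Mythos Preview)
The paper does not supply a proof of this theorem; it is quoted as a classical result with a reference to \cite[Theorem~3.1.14]{matching-theory}, and the paper merely remarks afterwards that Berge's formula is equivalent to Tutte's theorem (Theorem~\ref{tutte-theorem}). So there is no ``paper's own proof'' to compare against.

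Your argument is correct and is in fact one of the standard derivations of Berge's formula from Tutte's theorem. The easy inequality ${\rm def}(G)\ge d(G)$ is handled by the usual pigeonhole on odd components, and your parity observation $d(G)\equiv |V(G)|\pmod 2$ is exactly what is needed to make $|V(G')|$ even. The verification of Tutte's condition for the auxiliary graph $G'$ obtained by adjoining $d=d(G)$ universal vertices is done cleanly in the two cases $W\not\subseteq T$ and $W\subseteq T$, and the final count (at most $d$ vertices of $V(G)$ are matched into $W$, so the restriction of a perfect matching of $G'$ to $G$ misses at most $d$ vertices) gives ${\rm def}(G)\le d$. The boundary case $d=0$ (where $W=\emptyset$ and $G'=G$) is absorbed by the second case of your analysis, so nothing special is needed there. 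In short: the proposal is a complete and standard proof; the paper simply cites the result rather than proving it.
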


We come to the main combinatorial result of this section.

\begin{theorem}\label{pepe-vila-berge} Let $G$ be a graph. Then 
${\rm def}(G^f)=\delta$ for all $f\in E(G)$ if and only if ${\rm
def}(G)=\delta$ and $\nu(G^f)=\nu(G)+1$ for all $f\in E(G)$.
\end{theorem}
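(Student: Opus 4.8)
The plan is to prove both implications by relating the deficiency of $G^f$ to the deficiency of $G$ via the identity ${\rm def}(H)=|V(H)|-2\nu(H)$, together with Berge's formula (Theorem~\ref{berge-formula}). The crucial bookkeeping observation is that $V(G^f)$ has exactly two more vertices than $V(G)$, since $G^f$ is obtained by duplicating the two endpoints of $f$; hence
\[
{\rm def}(G^f)=|V(G^f)|-2\nu(G^f)=\big(|V(G)|+2\big)-2\nu(G^f)={\rm def}(G)+2-2\big(\nu(G^f)-\nu(G)\big).
\]
So the content of the theorem is really the equivalence between ``${\rm def}(G^f)$ is a constant $\delta$ independent of $f$'' and ``${\rm def}(G)=\delta$ together with $\nu(G^f)=\nu(G)+1$ for every $f$''. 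Given the displayed identity, if ${\rm def}(G)=\delta$ and $\nu(G^f)=\nu(G)+1$ for all $f$, then ${\rm def}(G^f)=\delta+2-2=\delta$ for all $f$, which is one direction and is essentially immediate.

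For the converse I would assume ${\rm def}(G^f)=\delta$ for all $f\in E(G)$ and must extract the two conclusions. First I would pin down $\nu(G^f)-\nu(G)$. Duplicating a vertex never destroys an edge and only adds vertices, so a maximum matching of $G$ is still a matching of $G^f$, giving $\nu(G^f)\geq\nu(G)$; on the other hand a matching of $G^f$ can use at most one edge incident to each duplicated vertex beyond what it uses in $G$, and a short argument (collapse the shadows $x_i',x_j'$ back onto $x_i,x_j$ and count) shows $\nu(G^f)\leq\nu(G)+1$. Combined with the displayed identity, ${\rm def}(G^f)=\delta$ forces $2-2(\nu(G^f)-\nu(G))=\delta-{\rm def}(G)$, so the quantity $\nu(G^f)-\nu(G)$, which lies in $\{0,1\}$, must be the \emph{same} value for every edge $f$, and correspondingly ${\rm def}(G)$ is forced. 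To finish I must rule out the case $\nu(G^f)=\nu(G)$ for all $f$ (which would give ${\rm def}(G)=\delta-2$). Here is where Berge's formula enters: if $G$ has at least one edge $f=\{x_i,x_j\}$, pick $S\subset V(G)$ attaining ${\rm def}(G)=c_0(G\setminus S)-|S|$; I would argue that duplicating the endpoints of a suitably chosen edge $f$ inside one of the odd components of $G\setminus S$ turns that odd component into an even one (or splits off structure) so that, using the \emph{same} $S$ as a witness in $G^f\setminus S$, one gets ${\rm def}(G^f)\geq c_0(G^f\setminus S)-|S|>{\rm def}(G)$, hence $\nu(G^f)=\nu(G)+1$ for that $f$ — contradicting the assumption that $\nu(G^f)-\nu(G)$ is constantly $0$. (If $E(G)=\emptyset$ the statement is vacuous since there are no isolated vertices, $G$ has an edge.)

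The step I expect to be the main obstacle is the parity argument in the last paragraph: showing that for at least one edge $f$ the deficiency strictly increases, equivalently that $\nu(G^f)=\nu(G)+1$ for some $f$. One must be careful that duplicating $x_i$ and $x_j$ changes the parity of the component of $G\setminus S$ containing them by exactly two vertices (so parity is \emph{preserved}, not flipped) — so the naive "odd becomes even" idea needs refinement: the right move is to choose $f$ with exactly one endpoint, say $x_i$, lying in an odd component $C$ of $G\setminus S$ and the other endpoint $x_j$ in $S$, or to choose $f$ joining two distinct odd components, and then track how $c_0$ and $|S|$ change when one re-optimizes Berge's formula over subsets of $V(G^f)$. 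Handling all the cases for where $f$ sits relative to an optimal Berge set $S$ — and checking that at least one admissible $f$ always exists when $G$ has an edge — is the technical heart of the proof; everything else reduces to the vertex-count identity above.
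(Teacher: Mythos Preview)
Your identity ${\rm def}(G^f)={\rm def}(G)+2-2\big(\nu(G^f)-\nu(G)\big)$ and the backward implication are fine, but the forward argument breaks at a concrete point: the claimed inequality $\nu(G^f)\leq\nu(G)+1$ is false. The paper's own Example in Figs.~7--8 is a counterexample: for the double star $G$ on six vertices with central edge $f=\{x_3,x_4\}$ one has $\nu(G)=2$ while $\nu(G^f)=4$, so $\nu(G^f)=\nu(G)+2$. Your ``collapse the shadows'' sketch only yields $\nu(G^f)\leq\nu(G)+2$, not $+1$.

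Worse, your lower bound is too weak and this flips the whole case analysis. Since $\{x_i,x_j\}\in E(G)$, the pair $\{x_i',x_j'\}$ is an edge of $G^f$, and adjoining it to any maximum matching of $G$ gives a matching of $G^f$; hence $\nu(G^f)\geq\nu(G)+1$ \emph{always}, equivalently ${\rm def}(G^f)\leq{\rm def}(G)$ always. Thus under the hypothesis ${\rm def}(G^f)=\delta$ for all $f$, the only possibilities are ${\rm def}(G)=\delta$ (good) or ${\rm def}(G)>\delta$ (to be excluded). You are trying to exclude ${\rm def}(G)=\delta-2$, a case that cannot occur, and in your final paragraph you attempt to prove ${\rm def}(G^f)>{\rm def}(G)$, which is impossible by the inequality just noted; the implication you draw from it (``hence $\nu(G^f)=\nu(G)+1$'') is also reversed.

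The paper's proof is the Berge argument you gesture at, but aimed the other way: assume ${\rm def}(G)>\delta$, pick $S$ with $c_0(G\setminus S)-|S|>\delta$, and show that for a suitably chosen $f$ the same (or slightly enlarged) $S$ still witnesses $c_0(G^f\setminus S')-|S'|>\delta$, contradicting ${\rm def}(G^f)=\delta$. There are two cases. If some odd component $H_k$ of $G\setminus S$ has at least two vertices, take $f\subset H_k$; then $H_k$ becomes $H_k^f$, which is still odd (two vertices were added), so $c_0$ and $|S|$ are unchanged. If every odd component is a singleton, take $f$ from such a singleton $\{x_i\}$ into $S$ and enlarge $S$ by the duplicate $y_j$; now $\{y_i\}$ is a new odd singleton, so $c_0$ and $|S|$ each go up by one and the difference is preserved. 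Your intuition that ``parity is preserved, not flipped'' is exactly right --- and that preservation is what makes the argument work, once it is pointed in the correct direction.
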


\begin{proof} Assume that ${\rm def}(G^f)=\delta$ for all $f\in
E(G)$. In general, ${\rm def}(G)\geq {\rm
def}(G^f)$ for any $f\in E(G)$. We proceed by contradiction. Assume
that ${\rm def}(G)>\delta$. Then, by Berge's theorem, there is an $S\subset V(G)$
such that $c_0(G\setminus S)-|S|>\delta$. We set $r=c_0(G\setminus S)$ and
$s=|S|$. Let $H_1,\ldots,H_r$ be the odd 
components of $G\setminus S$.

Case (I): $|V(H_k)|\geq 2$ for some $1\leq k\leq r$. Pick an edge
$f=\{x_i,x_j\}$ of $H_k$. Consider the
parallelization $H_k'$ obtained from $H_k$ by duplicating the vertices $x_i$
and $x_j$, i.e., $H_k'=H_k^f$. The odd connected components of
$G^f\setminus S$ are $H_1,H_2,\ldots,H_{k-1},H_k',H_{k+1}\ldots,H_r$. Thus
$$
c_0(G^f\setminus S)-|S|>\delta={\rm def}(G^f).
$$ 
This contradicts
Berge's theorem when applied to $G^f$.

Case (II): $|V(H_k)|=1$ for $1\leq k\leq r$. Notice that in this case
$S\neq\emptyset$ because $G$ has no isolated vertices. Pick
$f=\{x_i,x_j\}$ an edge of $G$ with $\{x_i\}=V(H_1)$ and $x_j\in S$. 
Let $y_i$ and $y_j$ be the duplications of $x_i$
and $x_j$ respectively. The odd components of
$G^f\setminus(S\cup\{y_j\})$ are $H_1,\ldots,H_r,\{y_i\}$. 
Thus 
$$
c_0(G^f\setminus(S\cup\{y_j\}))-|S\cup\{y_j\}|=c_0(G\setminus
S)-|S|>\delta={\rm def}(G^f).
$$ 
This again contradicts Berge's theorem when applied to $G^f$. Therefore
${\rm def}(G)={\rm def}(G^f)$ for all $f\in E(G)$. Consequently
$\nu(G^f)=\nu(G)+1$ for all $f\in E(G)$. The converse follows readily
using the definition of ${\rm def}(G)$ and ${\rm def}(G^f)$.
\end{proof}

The result of Theorem~\ref{pepe-vila-berge} depends upon the 
deficiency of $G^f$ being constant for all $f$. In general,
the deficiencies of $G$ and $G^f$ need not be equal.

\begin{example} Consider the graph $G$ of Fig. 7, where vertices
are labeled with $i$ instead of $x_i$. The duplication of the vertices $x_3$ and
$x_4$ of $G$ is shown in Fig. 8.
$$
\begin{array}{ccc}
\setlength{\unitlength}{.04cm} \thicklines
\begin{picture}(0,40)(80,-5)
\put(0,0){\circle*{4.2}} \put(30,0){\circle*{4.2}} \put(-20,20){\circle*{4.2}}
\put(-20,-20){\circle*{4.2}}
\put(50,20){\circle*{4.2}}\put(50,-20){\circle*{4.2}}

\put(0,0){\line(-1,-1){20}}\put(0,0){\line(-1,1){20}}\put(0,0){\line(1,0){30}}
\put(30,0){\line(1,1){20}} \put(30,0){\line(1,-1){20}}

\newcommand{\lb}[1]{\tiny $#1$}
\put(-27,20){\lb{1}} \put(-27,-20){\lb{2}} \put(-9,-2){\lb{3}}
\put(34,-2){\lb{4}}
\put(55,20){\lb{5}}\put(55,-20){\lb{6}}\put(-23,-40){Fig. 7.
$\mbox{def}(G)=2$}
\end{picture}

& \ \ & 
\setlength{\unitlength}{.04cm} \thicklines
\begin{picture}(0,40)(-40,-5)
\put(0,0){\circle*{4.2}} \put(30,0){\circle*{4.2}} \put(-20,20){\circle*{4.2}}
\put(-20,-20){\circle*{4.2}}
\put(50,20){\circle*{4.2}}\put(50,-20){\circle*{4.2}}
\put(0,20){\circle*{4.2}}\put(30,20){\circle*{4.2}}

\put(0,0){\line(-1,-1){20}}\put(0,0){\line(-1,1){20}}\put(0,0){\line(1,0){30}}
\put(30,0){\line(1,1){20}} \put(30,0){\line(1,-1){20}}
\put(0,20){\line(-1,0){20}}\put(0,20){\line(-1,-2){20}}
\put(30,20){\line(1,0){20}} \put(30,20){\line(1,-2){20}}
\put(0,0){\line(3,2){30}} \put(30,0){\line(-3,2){30}} \put(0,20){\line(1,0){30}}

\newcommand{\lb}[1]{\tiny $#1$}
\put(-27,20){\lb{1}} \put(-27,-20){\lb{2}} \put(-7,-2){\lb{3}}
\put(34,-2){\lb{4}}
\put(55,20){\lb{5}}\put(55,-20){\lb{6}}\put(-7,24){\lb{3'}}\put(34,24){\lb{4'}}
\put(-40,-40){Fig. 8. $\mbox{def}(G^{(1,1,2,2,1,1)})=0$}
\end{picture}

\end{array}
$$
\end{example}

\vspace{1.5cm}

The theorem of Berge is equivalent to the following classical result of Tutte
describing perfect matchings \cite{matching-theory}.

\begin{theorem}{\rm (Tutte; see
\cite[Theorem~2.2.1]{diestel})}\label{tutte-theorem} 
A graph $G$ has a perfect matching if and only if 
$c_0(G\setminus S) \leq | S | $ for all $S\subset  V(G)$.
\end{theorem}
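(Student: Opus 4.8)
The plan is to deduce Tutte's theorem directly from Berge's formula (Theorem~\ref{berge-formula}), which is already available. Recall that ${\rm def}(G)=|V(G)|-2\nu(G)$, and since any matching of $G$ has at most $|V(G)|/2$ edges we have ${\rm def}(G)\geq 0$; moreover $G$ has a perfect matching if and only if $\nu(G)=|V(G)|/2$, that is, if and only if ${\rm def}(G)=0$. So it suffices to characterize when ${\rm def}(G)=0$.

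By Berge's formula, ${\rm def}(G)=\max\{c_0(G\setminus S)-|S|\,\vert\, S\subset V(G)\}$. Taking $S=\emptyset$ shows this maximum is at least $c_0(G)\geq 0$ (consistent with ${\rm def}(G)\geq 0$). Now I would argue in both directions. If $G$ has a perfect matching, then ${\rm def}(G)=0$, so every $S\subset V(G)$ satisfies $c_0(G\setminus S)-|S|\leq 0$, i.e.\ $c_0(G\setminus S)\leq |S|$. Conversely, if $c_0(G\setminus S)\leq |S|$ for all $S\subset V(G)$, then the maximum defining ${\rm def}(G)$ is at most $0$; combined with ${\rm def}(G)\geq 0$ this forces ${\rm def}(G)=0$, hence $G$ has a perfect matching.

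The argument has essentially no obstacle once Berge's formula is in hand: the only point requiring a word is that the Berge maximum is nonnegative (equivalently ${\rm def}(G)\geq 0$), which is immediate by evaluating at $S=\emptyset$. If one instead wanted a proof independent of Theorem~\ref{berge-formula}, the necessity direction would still be easy — any perfect matching must send an edge out of each odd component of $G\setminus S$ into $S$, and these edges meet $S$ in distinct vertices — while the sufficiency direction would require a genuine argument, such as the standard extremal/inductive proof (add a maximal set of edges preserving Tutte's condition, show the resulting graph is a disjoint union of complete graphs, and read off a perfect matching from the parity bookkeeping). Since Berge's formula is already stated and used above, the short deduction is the route I would take.
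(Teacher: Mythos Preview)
Your deduction from Berge's formula is correct. The paper does not actually supply its own proof of Tutte's theorem: it merely states the result with a citation to Diestel, prefacing it with the remark that ``The theorem of Berge is equivalent to the following classical result of Tutte.'' Your argument makes precisely this equivalence explicit in one direction, so it is entirely in the spirit of what the paper asserts but does not spell out.
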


We give the following characterization of perfect matchings in terms
of duplications of edges. 

\begin{corollary}\label{pepe-vila} Let $G$ be a graph. 
Then $G$ has a perfect matching if
and only if $G^f$ has a perfect matching for every edge $f$ of $G$.
\end{corollary}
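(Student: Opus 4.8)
The plan is to reduce the corollary to Theorem~\ref{pepe-vila-berge} applied with $\delta = 0$, using that $G$ (resp.\ $G^f$) has a perfect matching exactly when ${\rm def}(G) = 0$ (resp.\ ${\rm def}(G^f) = 0$). One implication then drops out of the theorem, and the other is a short direct matching construction.

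First I would prove the forward direction by hand. Let $M$ be a perfect matching of $G$, fix an edge $f = \{x_i, x_j\}$ of $G$, and let $y_i, y_j$ denote the two vertices added in passing from $G$ to $G^f = G^{\mathbf{1}+e_i+e_j}$, so that $V(G^f) = V(G) \cup \{y_i, y_j\}$. By the description of the edges of a parallelization used in the proof of Lemma~\ref{multiset-perfect-matching}, the pair $\{y_i, y_j\}$ is an edge of $G^f$ (duplicating both endpoints of $f$ creates an edge joining the two new copies), and it is vertex-disjoint from every edge of $M$, since those edges lie entirely in $V(G)$. Hence $M \cup \{\{y_i, y_j\}\}$ is a matching of $G^f$ covering $V(G) \cup \{y_i, y_j\} = V(G^f)$, that is, a perfect matching of $G^f$. (This incidentally gives $\nu(G^f) = \nu(G) + 1$, which is the extra input Theorem~\ref{pepe-vila-berge} would require in this direction.)

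For the converse, assume $G^f$ has a perfect matching --- equivalently ${\rm def}(G^f) = 0$ --- for every edge $f$ of $G$. If $E(G) = \emptyset$ then $G$ has no vertices, since isolated vertices are not allowed, and $G$ trivially has a perfect matching; so assume $E(G) \neq \emptyset$. Applying Theorem~\ref{pepe-vila-berge} with $\delta = 0$: the hypothesis ``${\rm def}(G^f) = \delta$ for all $f \in E(G)$'' holds, so the conclusion yields ${\rm def}(G) = \delta = 0$, which says precisely that $G$ has a perfect matching.

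The substantive content lies in this converse, and the real work is already done: it is the ``only if'' half of Theorem~\ref{pepe-vila-berge} at $\delta = 0$, which in turn rests on Berge's formula (Theorem~\ref{berge-formula}) and the component-counting argument in its proof. If one instead wanted a self-contained argument here, I would rerun that argument with Tutte's theorem (Theorem~\ref{tutte-theorem}) in place of Berge's: from a set $S$ with $c_0(G\setminus S) > |S|$, choose an edge $f$ lying inside an odd component of order $\geq 2$ (Case I) or joining a singleton odd component to $S$ (Case II), and verify that the corresponding set --- $S$ itself, respectively $S$ together with one of the new vertices --- violates Tutte's condition for $G^f$, contradicting the existence of a perfect matching in $G^f$. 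The only delicate point is the bookkeeping comparing the odd components of $G^f\setminus S$ with those of $G\setminus S$, handled exactly as in the proof of Theorem~\ref{pepe-vila-berge}.
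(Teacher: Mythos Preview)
Your proof is correct and follows essentially the same approach as the paper: the forward direction is the direct construction $M \cup \{\{y_i,y_j\}\}$, and the converse is the application of Theorem~\ref{pepe-vila-berge} with $\delta=0$ to conclude ${\rm def}(G)=0$. The extra care you take (handling the vacuous case $E(G)=\emptyset$ and sketching the alternative Tutte-based argument) is fine but not needed.
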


\begin{proof} Assume that $G$ has a perfect matching. Let
$f_1,\ldots,f_{n/2}$ be a set of 
edges of $G$ that form a perfect matching of $V(G)$, 
where $n$ is the number of vertices of $G$. If $f=\{x_i,x_j\}$ is any
edge of $G$ and $y_i$, $y_j$ are the duplications of the vertices
$x_i$ and $x_j$ respectively, then clearly
$f_1,\ldots,f_{n/2},\{y_i,y_j\}$ form a perfect matching of $V(G^f)$.
Conversely, if $G^f$ has a perfect matching for all $f\in E(G)$, then
${\rm def}(G^f)=0$ for all $f\in E(G)$. Hence, by 
Theorem~\ref{pepe-vila-berge}, we get that ${\rm def}(G)=0$, so $G$
has a perfect matching. 
\end{proof}

The following lemma will play an important role in the proof of the
main theorem. It uses the preceding combinatorial results about
matchings to prove an algebraic equality.

\begin{lemma}\label{jan29-2011} Let $I$ be the edge ideal of a graph $G$. 
Then $(I^{k+1}\colon I)=I^k$ for $k\geq 1$.
\end{lemma}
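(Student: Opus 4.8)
The plan is to prove the nontrivial inclusion $(I^{k+1}\colon I)\subseteq I^k$ by working monomial-by-monomial and translating everything into the language of parallelizations and matching numbers via Lemma~\ref{feb9-2011}. Since $I$ is a monomial ideal, so is the colon ideal, and it suffices to take a monomial $x^a$ with $x^aI\subseteq I^{k+1}$ and show $x^a\in I^k$. The containment $x^aI\subseteq I^{k+1}$ says precisely that $x^af_i\in I^{k+1}$ for every generator $f_i$ of $I$, i.e. for every edge $f=\{x_i,x_j\}$ of $G$ we have $x^{a+e_i+e_j}\in I^{k+1}$. By Lemma~\ref{feb9-2011}(b), translating membership in powers of $I$ into matching numbers of parallelizations, this is the statement that $\nu(G^{a+e_i+e_j})\geq k+1$ for all edges $f$ of $G$, while $x^a\in I^k$ is equivalent to $\nu(G^a)\geq k$. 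So the whole lemma reduces to the purely combinatorial implication: if $\nu(G^{a+e_i+e_j})\geq k+1$ for every edge $\{x_i,x_j\}$ of $G$, then $\nu(G^a)\geq k$.

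Next I would set $G'=G^a$ and observe that $G^{a+e_i+e_j}=(G^a)^{\{x_i,x_j\}}$; more importantly, by Lemma~\ref{feb9-2011}(c), duplicating any particular shadow vertices $x_i^{k_i},x_j^{k_j}$ in $G'$ over an edge of $G'$ yields the same graph (up to isomorphism relevant for matching numbers) as $(G')^{\{x_i,x_j\}}=G^{a+e_i+e_j}$. Hence the hypothesis is equivalent to: $\nu((G')^f)\geq k+1$ for every edge $f$ of $G'$. Now I want to compare this with $\nu(G')$. Write $\delta={\rm def}(G')$ and $m=|V(G')|=|a|$, so $\nu(G')=(m-\delta)/2$. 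For any edge $f$ of $G'$, the graph $(G')^f$ has $m+2$ vertices, so $\nu((G')^f)=(m+2-{\rm def}((G')^f))/2=(m+2)/2-{\rm def}((G')^f)/2$. In general ${\rm def}(G')\geq{\rm def}((G')^f)$, so $\nu((G')^f)\leq\nu(G')+1$; combined with the hypothesis $\nu((G')^f)\geq k+1$ this forces $\nu(G')\geq k$ unless $\nu(G')=k-1$, in which case we would need $\nu((G')^f)=k=\nu(G')+1$ for every edge $f$, i.e. ${\rm def}((G')^f)={\rm def}(G')$ for all $f$.

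This is exactly the hypothesis of Theorem~\ref{pepe-vila-berge} (with $\delta = {\rm def}(G')$), which would then tell us that ${\rm def}((G')^f)=\delta$ for all $f$ holds precisely when $\nu((G')^f)=\nu(G')+1$ for all $f$ — but that is consistent, so the real work is to rule out the bad case $\nu(G')=k-1$ by a more careful argument. The escape is the following: if ${\rm def}((G')^f)={\rm def}(G')$ for every edge $f$ of $G'$, then by Theorem~\ref{pepe-vila-berge} we have ${\rm def}((G')^f)=\delta$ for all $f$, and iterating (each $(G')^f$ again has the property, applied to its own edges) we would get a graph whose deficiency never drops under edge-duplication; but duplicating an edge strictly inside an odd component of size $\geq 3$ — or, when all odd components are isolated vertices, using an edge to a vertex of the separating set $S$, exactly as in Cases (I) and (II) of the proof of Theorem~\ref{pepe-vila-berge} — exhibits a set certifying a strictly larger deficiency, contradiction, unless $G'$ already has ${\rm def}(G')\le \delta$ with no room to apply that argument, i.e. unless there is no such $S$, which by Berge forces ${\rm def}(G')=\delta$ to be attained and ultimately $\nu(G')\ge k$. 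I expect the main obstacle to be packaging this last step cleanly: one must show that $\nu(G')=k-1$ together with $\nu((G')^f)\geq k+1$ for all $f$ is outright impossible (not merely that equality propagates), and the cleanest route is probably to invoke Theorem~\ref{pepe-vila-berge} directly with $\delta=m-2(k-1)$ to deduce $\nu((G')^f)=\nu(G')+1=k$ for all $f$, contradicting $\nu((G')^f)\geq k+1$. With that contradiction in hand, $\nu(G^a)=\nu(G')\geq k$, hence $x^a\in I^k$ by Lemma~\ref{feb9-2011}(b), proving $(I^{k+1}\colon I)\subseteq I^k$; the reverse inclusion $I^k\subseteq(I^{k+1}\colon I)$ is immediate since $I^kI=I^{k+1}$.
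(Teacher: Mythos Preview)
Your overall strategy matches the paper's: reduce to a monomial, translate to matching numbers of $G^a$ and $(G^a)^f$ via Lemma~\ref{feb9-2011}, and appeal to Theorem~\ref{pepe-vila-berge}. But there is a genuine error in the second paragraph. You claim that ${\rm def}(G')\geq{\rm def}((G')^f)$ implies $\nu((G')^f)\leq\nu(G')+1$. The arithmetic gives the opposite inequality: from $m-2\nu(G')\geq(m+2)-2\nu((G')^f)$ one gets $\nu((G')^f)\geq\nu(G')+1$, which is useless for bounding $\nu(G')$ from below. That the inequality $\nu((G')^f)\leq\nu(G')+1$ is genuinely false is shown by the example immediately following Theorem~\ref{pepe-vila-berge}, where duplicating the edge $\{x_3,x_4\}$ raises the matching number from $2$ to $4$. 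This sign error is also what makes the subsequent case analysis incoherent: under your stated inequality the ``bad case'' $\nu(G')=k-1$ would already be an outright contradiction (it would give $\nu((G')^f)\leq k<k+1$), yet you then argue as if $\nu((G')^f)=k$ in that case and try to feed this into Theorem~\ref{pepe-vila-berge}.

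The missing ingredient, and the step the paper actually uses, is the elementary bound $\nu((G')^f)\leq\nu(G')+2$: since $(G')^f$ is obtained from $G'$ by adjoining two new vertices $y_i,y_j$, any matching of $(G')^f$ loses at most two edges upon deleting $y_i$ and $y_j$. Combined with $\nu((G')^f)\geq k+1$ this gives $\nu(G')\geq k-1$, and the genuine bad case $\nu(G')=k-1$ now forces $\nu((G')^f)=k+1$ \emph{exactly} for every edge $f$, hence ${\rm def}((G')^f)=|a|-2k$ is constant. Only then is the hypothesis of Theorem~\ref{pepe-vila-berge} available, and it yields ${\rm def}(G')=|a|-2k$, i.e.\ $\nu(G')=k$, a contradiction. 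The paper packages this slightly differently (it disposes of the case $f_ix^a\in I^{k+2}$ for some $i$ at the outset, which is exactly the case $\nu((G')^f)\geq k+2$ for some $f$), but the content is the same. Without the ``$\leq\nu(G')+2$'' bound you never establish constancy of ${\rm def}((G')^f)$, so neither direction of Theorem~\ref{pepe-vila-berge} can be invoked.
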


\begin{proof} Let $F=\{f_1,\ldots,f_q\}$
be the set of all monomials $x_ix_j$ such that $\{x_i,x_j\}\in E(G)$. Given
$c=(c_i)\in\mathbb{N}^q$, we set $f^c=f_1^{c_1}\cdots f_q^{c_q}$. It
is well known that the colon ideal of two monomial ideals is a
monomial ideal, see for instance \cite[p.~137]{monalg}. In particular
$(I^{k+1}\colon I)$ is a monomial 
ideal. Clearly $I^k\subset(I^{k+1}\colon I)$. To show the
reverse inclusion it suffices to show that any monomial of
$(I^{k+1}\colon I)$ is in $I^k$. Take $x^a\in(I^{k+1}\colon I)$. Then $f_ix^a\in I^{k+1}$ for
$i=1,\ldots,q$. We may assume that $f_ix^a\notin I^{k+2}$, otherwise
$x^a\in I^k$ as required. Thus $x^{a+e_i+e_j}\in I^{k+1}\setminus
I^{k+2}$ for any $e_i+e_j$ such that
$\{x_i,x_j\}\in E(G)$. Hence, by Lemma~\ref{feb9-2011}(b),
$\nu(G^{a+e_i+e_j})=k+1$ for any $\{x_i,x_j\}\in E(G)$, that is, 
$(G^a)^{\{x_i,x_j\}}$ has a maximum matching of size $k+1$ for any edge
$\{x_i,x_j\}$ of $G$. With the notation used in the proof of 
Lemma~\ref{multiset-perfect-matching}, for any edge $\{x_i^{k_i},x_j^{k_j}\}$ of $G^a$ we have
$$
(G^a)^{\{x_i^{k_i},x_j^{k_j}\}}=(G^a)^{\{x_i,x_j\}},
$$
see Lemma~\ref{feb9-2011}(c). Then, $(G^a)^f$ has a maximum matching
of size $k+1$ for any edge $f$ 
of $G^a$. As a consequence 
$$
{\rm def}((G^a)^f)=(|a|+2)-2(k+1)=|a|-2k
$$
for any edge $f$ of $G^a$. Therefore, by
Theorem~\ref{pepe-vila-berge}, ${\rm def}(G^a)=|a|-2k$. Using
Lemma~\ref{feb9-2011}(a), we can write $x^a=x^\delta f^c$, 
where $|\delta|={\rm def}(G^a)$ and $|c|=\nu(G^a)$. Taking degrees in
the equality $x^a=x^\delta f^c$ gives
$|a|=|\delta|+2|c|=(|a|-2k)+2|c|$, that is, $|c|=k$. Then $x^a\in I^k$
and the proof is complete.
\end{proof}

\begin{proposition}\label{jan29-2011-1}
Let $I=I(G)$ be the edge ideal of a graph $G$ and let
$\mathfrak{m}=(x_1,\ldots,x_n)$. If $\mathfrak{m}\in {\rm
Ass}(R/I^k)$, then $\mathfrak{m}\in {\rm
Ass}(R/I^{k+1})$.
\end{proposition}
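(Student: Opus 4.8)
The plan is to deduce this directly from Lemma~\ref{jan29-2011} together with the maximality of $\mathfrak{m}$. Since $\mathfrak{m}\in{\rm Ass}(R/I^k)$, there is an element $c\in R$ with $\mathfrak{m}=(I^k\colon c)$. First I would rewrite this colon using the equality $I^k=(I^{k+1}\colon I)$ supplied by Lemma~\ref{jan29-2011} together with the standard identities for colon ideals. Writing $F=\{f_1,\dots,f_q\}$ for the edge generators of $I$, one obtains
$$\mathfrak{m}=(I^k\colon c)=\big((I^{k+1}\colon I)\colon c\big)=(I^{k+1}\colon cI)=\bigcap_{i=1}^q(I^{k+1}\colon cf_i).$$

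The second step is to exploit that $\mathfrak{m}$ is a maximal ideal. Each ideal $(I^{k+1}\colon cf_i)$ occurs in an intersection equal to $\mathfrak{m}$, hence contains $\mathfrak{m}$, and therefore equals either $\mathfrak{m}$ or $R$. They cannot all equal $R$: if $cf_i\in I^{k+1}$ for every $i$, then $cI\subseteq I^{k+1}$, so $c\in(I^{k+1}\colon I)=I^k$, which would force $(I^k\colon c)=R\neq\mathfrak{m}$, a contradiction. Hence $(I^{k+1}\colon cf_j)=\mathfrak{m}$ for some $j$, and this is precisely the assertion that $\mathfrak{m}$ is realized as the colon $(I^{k+1}\colon c')$ with $c'=cf_j$; thus $\mathfrak{m}\in{\rm Ass}(R/I^{k+1})$.

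I do not expect a genuine obstacle here: the only substantial ingredient is Lemma~\ref{jan29-2011}, whose proof carries all the combinatorial weight (Berge's formula and Theorem~\ref{pepe-vila-berge}), while the remaining manipulations are just the elementary identities $((A\colon B)\colon C)=(A\colon BC)$ and $(A\colon(g_1,\dots,g_m))=\bigcap_i(A\colon g_i)$, plus the pigeonhole-type observation about a finite intersection of ideals each wedged between $\mathfrak{m}$ and $R$. If one prefers to argue with monomials throughout, one may take $c$ to be a monomial $x^a$ at the outset, since associated primes of monomial ideals arise as monomial colons; the displayed chain of identities and the contradiction step remain unchanged.
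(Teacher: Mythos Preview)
Your proposal is correct and is essentially the same argument as the paper's, just expressed via colon-ideal identities rather than explicit monomials. The paper takes a monomial $x^a\notin I^k$ with $\mathfrak{m}x^a\subset I^k$, uses Lemma~\ref{jan29-2011} to find an edge $f_j=x_ix_j$ with $f_jx^a\notin I^{k+1}$, and then observes $\mathfrak{m}(f_jx^a)\subset I^{k+1}$; this is exactly your intersection/pigeonhole step unwound.
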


\begin{proof} As $\mathfrak{m}$ is an associated prime of $R/I^{k}$,
there is $x^a\notin I^k$ such that $\mathfrak{m}x^a\subset I^k$. 
By Lemma~\ref{jan29-2011} there is an edge $\{x_i,x_j\}$ of $G$ such
that $x_ix_jx^a\notin I^{k+1}$. Then, $x_\ell(x_ix_jx^a)\in I^{k+1}$
for $\ell=1,\ldots,n$, that is, $\mathfrak{m}$ is an associated prime
of $R/I^{k+1}$.
\end{proof}

To generalize from the maximal ideal to arbitrary associated primes,
we will use localization. Since this process frequently results in
disjoint graphs, we first recall the following fact about associated
primes.

\begin{lemma}\label{disjoint}{\rm (\cite[Lemma~3.4]{HaM}, see also
  \cite[Lemma~2.1]{AJ})} Let $I$ be a square-free monomial ideal in $S = K[x_1,\dots,x_m,
  x_{m+1}, \dots, x_r]$ such that 
$I=I_1S + I_2S$, where $I_1 \subset S_1 = K[x_1, \dots, x_m]$
and $I_2 \subset S_2 = K[x_{m+1}, \dots, x_r]$. Then ${\mathfrak p}\in
{\rm Ass} (S/I^k)$ 
if and only if 
${\mathfrak p}={\mathfrak p}_1S + {\mathfrak p}_2S$, where ${\mathfrak
  p}_1 \in {\rm Ass} (S_1/I_1^{k_1})$ and ${\mathfrak p}_2 \in 
{\rm Ass} (S_2/I_2^{k_2})$ with $(k_1-1) + (k_2-1) = k-1$.
\end{lemma}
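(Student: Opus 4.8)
The plan is to reduce the whole statement to a single numerical invariant of monomials. Writing $S=S_1\otimes_K S_2$, every monomial of $S$ factors uniquely as $m=m_1m_2$ with $m_\ell$ a monomial of $S_\ell$, and every monomial prime of $S$ has the form $\mathfrak p=\mathfrak p_1S+\mathfrak p_2S$ with $\mathfrak p_\ell$ a monomial prime of $S_\ell$ satisfying $\mathfrak p\cap S_\ell=\mathfrak p_\ell$. Expanding the power gives $I^k=\sum_{i+j=k}I_1^iI_2^jS$ (with $I_\ell^0:=S_\ell$), and since the two groups of variables are disjoint, a comparison of monomial generators yields the criterion
$$
m_1m_2\in I^k\quad\Longleftrightarrow\quad \omega_1(m_1)+\omega_2(m_2)\ge k,
$$
where $\omega_\ell(m_\ell):=\max\{t\ge 0\mid m_\ell\in I_\ell^t\}$; this maximum is finite because $I_\ell$ is a proper monomial ideal. (We may assume $I_1,I_2\ne(0)$, so that the primes $\mathfrak p_\ell$ occurring below are also nonzero; the degenerate case follows from flat base change.)

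The one technical ingredient I need is the estimate $\omega_\ell(x_jm_\ell)\le\omega_\ell(m_\ell)+1$ for every variable $x_j$. To prove it, suppose $g_1\cdots g_t$ divides $x_jm_\ell$ with the $g_i$ minimal generators of $I_\ell$. Either the exponent of $x_j$ in $g_1\cdots g_t$ does not exceed its exponent in $m_\ell$, in which case $g_1\cdots g_t\mid m_\ell$; or it exceeds it by exactly one, in which case removing one factor $g_i$ involving $x_j$ leaves a product of $t-1$ generators dividing $m_\ell$. Either way $m_\ell\in I_\ell^{t-1}$.

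For the ``if'' direction I would choose monomials $m_\ell\notin I_\ell^{k_\ell}$ with $I_\ell^{k_\ell}\colon m_\ell=\mathfrak p_\ell$; applying the estimate to a variable $x_j\in\mathfrak p_\ell$ forces $\omega_\ell(m_\ell)=k_\ell-1$ exactly, so $\omega_1(m_1)+\omega_2(m_2)=(k_1-1)+(k_2-1)=k-1<k$ and $m_1m_2\notin I^k$. If $n=n_1n_2$ satisfies $nm_1m_2\in I^k$, then $\omega_1(n_1m_1)+\omega_2(n_2m_2)\ge k=k_1+k_2-1$, so $\omega_\ell(n_\ell m_\ell)\ge k_\ell$ for some $\ell$, i.e.\ $n_\ell\in I_\ell^{k_\ell}\colon m_\ell=\mathfrak p_\ell$ and hence $n\in\mathfrak p$; conversely, for $x_j\in\mathfrak p_\ell$ one has $\omega_\ell(x_jm_\ell)\ge k_\ell$ while the other factor is unchanged, so $x_jm_1m_2\in I^k$. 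Thus $I^k\colon(m_1m_2)=\mathfrak p$ and $\mathfrak p\in{\rm Ass}(S/I^k)$.

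For the ``only if'' direction I would write $\mathfrak p=I^k\colon m$ with $m=m_1m_2\notin I^k$, and, among all monomials $m'$ with $I^k\colon m'=\mathfrak p$, choose one maximizing $\omega_1(m_1')+\omega_2(m_2')$; by the criterion this maximum is at most $k-1$. If it were at most $k-2$, then for a variable $x_j\in\mathfrak p$, say $x_j\in\mathfrak p_1$, the relation $x_jm\in I^k$ would give $\omega_1(x_jm_1)\ge k-\omega_2(m_2)\ge\omega_1(m_1)+2$, contradicting the estimate; hence $\omega_1(m_1)+\omega_2(m_2)=k-1$. Setting $k_\ell=\omega_\ell(m_\ell)+1$ gives $(k_1-1)+(k_2-1)=k-1$, and it remains to verify $I_\ell^{k_\ell}\colon m_\ell=\mathfrak p_\ell$: ``$\supset$'' comes from $\mathfrak p_\ell\subset\mathfrak p=I^k\colon m$ together with the criterion, while ``$\subset$'' holds because $n_\ell m_\ell\in I_\ell^{k_\ell}$ forces $n_\ell m\in I^k$, so $n_\ell\in\mathfrak p\cap S_\ell=\mathfrak p_\ell$. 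Since $m_\ell\notin I_\ell^{k_\ell}$, this gives $\mathfrak p_\ell\in{\rm Ass}(S_\ell/I_\ell^{k_\ell})$. The delicate step is this maximality argument: it is precisely the interaction of that choice with the estimate $\omega_\ell(x_jm_\ell)\le\omega_\ell(m_\ell)+1$ that pins the exponents down to $k_1+k_2=k+1$ rather than merely $\le k+1$.
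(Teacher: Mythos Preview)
The paper does not supply its own proof of this lemma; it is quoted from \cite[Lemma~3.4]{HaM} (see also \cite[Lemma~2.1]{AJ}) and used as a black box. Your argument must therefore be assessed on its own merits rather than against anything in the paper.

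Your proof is correct. The device of tracking $\omega_\ell(m_\ell)=\max\{t\ge 0:m_\ell\in I_\ell^t\}$ together with the estimate $\omega_\ell(x_jm_\ell)\le\omega_\ell(m_\ell)+1$ handles both directions cleanly, and your verification of that estimate is sound (indeed it holds for arbitrary monomial ideals, not only square-free ones: if $g_1\cdots g_t\mid x_jm_\ell$ and the $x_j$-degree of the product exceeds that of $m_\ell$, some $g_i$ contains $x_j$, and dropping that one factor already leaves $t-1$ generators dividing $m_\ell$). The two colon computations in the ``if'' and ``only if'' directions are checked correctly.

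One remark: the maximality step you single out as ``delicate'' is actually unnecessary. For \emph{any} monomial $m$ with $I^k\colon m=\mathfrak p$ you already obtain $\omega_1(m_1)+\omega_2(m_2)=k-1$: the inequality $\le k-1$ is immediate from $m\notin I^k$, and if it were $\le k-2$ then choosing any $x_j\in\mathfrak p$ (say $x_j\in S_1$) gives, exactly as you wrote, $\omega_1(x_jm_1)\ge k-\omega_2(m_2)\ge\omega_1(m_1)+2$, contradicting the estimate. No special choice of $m$ is invoked in that chain. So you may drop the maximization and run the rest of the ``only if'' argument with the original witness $m$; everything else goes through unchanged.
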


Note that this lemma easily generalizes to an ideal $I=(I_1,\ldots,I_s)$ where the 
$I_i$ are square-free monomial ideals in disjoint sets of variables. 
Then ${\mathfrak p} \in {\rm Ass}(R/I^k)$ if and only if 
${\mathfrak p}=({\mathfrak p}_1,\ldots , {\mathfrak p}_s)$, where 
${\mathfrak p}_i \in {\rm Ass}(R/I_i^{k_i})$ with $(k_1-1)+\cdots 
+(k_s-1)=k-1$. 

Note that although $\mathfrak{p}_i$ is an ideal of $R$, the
generators of $\mathfrak{p}_i$ will generate a prime ideal in
any ring that contains those variables. We will abuse notation in the sequel by
denoting the ideal generated by the generators of $\mathfrak{p}_i$ in
any other ring by $ \mathfrak{p}_i$ as
well.

\medskip

We come to the main algebraic result of this paper.

\begin{theorem}\label{persistence-edge-ideals} 
Let $G$ be a graph and let $I=I(G)$ be its edge
ideal. Then 
$${\rm Ass}(R/I^k) \subset{\rm Ass}(R/I^{k+1})$$
for all $k$. That
is, the sets of associated primes of the powers of $I$ form an
ascending chain.  
\end{theorem}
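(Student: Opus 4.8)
The plan is to combine the local statement for the maximal ideal (Proposition~\ref{jan29-2011-1}) with localization, using Lemma~\ref{disjoint} to handle the disconnected graphs that localization produces. Let $\mathfrak{p}\in {\rm Ass}(R/I^k)$; since $I$ is a monomial ideal, $\mathfrak{p}=(x_{i_1},\ldots,x_{i_r})$ for some subset of the variables. Localize at $\mathfrak{p}$: equivalently, pass to the polynomial ring $R'=K[x_{i_1},\ldots,x_{i_r}]$ on exactly the variables in $\mathfrak{p}$, and let $I'$ be the edge ideal of the induced subgraph $G'=G[\{x_{i_1},\ldots,x_{i_r}\}]$. The standard fact that associated primes behave well under localization gives that $\mathfrak{p}R'$ (which is the maximal graded ideal $\mathfrak{m}'$ of $R'$) lies in ${\rm Ass}(R'/(I')^k)$.

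The subtlety is that $G'$ need not be connected and need not be without isolated vertices. Any isolated vertex $x_{i_j}$ of $G'$ cannot appear among the generators of a monomial prime associated to a power of an edge ideal on the remaining vertices, so if $\mathfrak{p}$ is genuinely associated then $G'$ has no isolated vertices; this is where one checks carefully that the induced subgraph really does use all $r$ variables. Writing $G'$ as a disjoint union of connected components $G_1,\ldots,G_s$ with edge ideals $I_1,\ldots,I_s$ in disjoint variable sets, Lemma~\ref{disjoint} (in the generalized form stated after it) tells us that $\mathfrak{m}'=(\mathfrak{m}_1,\ldots,\mathfrak{m}_s)\in {\rm Ass}(R'/(I')^k)$ forces $\mathfrak{m}_i\in {\rm Ass}(R_i/I_i^{k_i})$ for each $i$, where $R_i=K[V(G_i)]$, $\mathfrak{m}_i$ is the graded maximal ideal of $R_i$, and $(k_1-1)+\cdots+(k_s-1)=k-1$.

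Now apply Proposition~\ref{jan29-2011-1} to the single index $i=1$ (say): since $\mathfrak{m}_1\in {\rm Ass}(R_1/I_1^{k_1})$, we get $\mathfrak{m}_1\in {\rm Ass}(R_1/I_1^{k_1+1})$. Keeping $k_2,\ldots,k_s$ the same, the new exponents satisfy $(k_1+1-1)+(k_2-1)+\cdots+(k_s-1)=k$, so the generalized Lemma~\ref{disjoint}, read in the reverse direction, yields $\mathfrak{m}'=(\mathfrak{m}_1,\ldots,\mathfrak{m}_s)\in {\rm Ass}(R'/(I')^{k+1})$. Finally, pull this back up: $\mathfrak{m}'$ being associated to $(I')^{k+1}$ in $R'$ means $\mathfrak{p}R_\mathfrak{p}\in{\rm Ass}(R_\mathfrak{p}/(IR_\mathfrak{p})^{k+1})$, and by the correspondence between associated primes of $R/I^{k+1}$ contained in $\mathfrak{p}$ and associated primes of $R_\mathfrak{p}/(I^{k+1})_\mathfrak{p}$, we conclude $\mathfrak{p}\in {\rm Ass}(R/I^{k+1})$, as desired.

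The main obstacle I anticipate is the bookkeeping around localization and induced subgraphs: one must make precise that ${\rm Ass}(R/I^k)$-membership of the monomial prime $\mathfrak{p}$ is detected entirely on the variables of $\mathfrak{p}$ (so that replacing $R$ by $R'$ and $I$ by the induced edge ideal $I'$ loses nothing), and that no variable in $\mathfrak{p}$ becomes isolated in $G'$ — otherwise the "edge ideal" language and Lemma~\ref{disjoint} do not apply cleanly. Both points are standard for square-free monomial ideals (an associated prime of a power of a square-free monomial ideal localizes to the maximal ideal, and its variables are exactly the support of the corresponding witness colon), but they deserve an explicit sentence. Everything after that is a purely formal application of Lemma~\ref{disjoint} and Proposition~\ref{jan29-2011-1}.
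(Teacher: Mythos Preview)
Your overall strategy---localize at $\mathfrak{p}$, split via Lemma~\ref{disjoint}, apply Proposition~\ref{jan29-2011-1} to one piece, and reassemble---is exactly the paper's. The gap is in the identification of the localized ideal. Localizing $I$ at $\mathfrak{p}=(x_{i_1},\ldots,x_{i_r})$ does \emph{not} produce the edge ideal of the induced subgraph $G'=G[\{x_{i_1},\ldots,x_{i_r}\}]$: the variables outside $\mathfrak{p}$ become units, so every edge $\{x_i,x_j\}$ with $x_i\in\mathfrak{p}$ and $x_j\notin\mathfrak{p}$ contributes the \emph{linear} generator $x_i$ to $I_\mathfrak{p}$. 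Thus the image of $I$ in $R'$ is $I_1+I_2$, where $I_1$ is generated by those $x_i\in\mathfrak{p}$ having a neighbor outside $\mathfrak{p}$, and $I_2$ is the edge ideal on the remaining variables of $\mathfrak{p}$. Your claim that ``if $\mathfrak{p}$ is genuinely associated then $G'$ has no isolated vertices'' is false already for minimal primes: take $G$ the path $x_1\!-\!x_2\!-\!x_3$ and $\mathfrak{p}=(x_2)\in{\rm Ass}(R/I)$; the induced subgraph on $\{x_2\}$ is a single isolated vertex, and your $I'$ is the zero ideal, so $\mathfrak{m}'\notin{\rm Ass}(R'/(I')^k)$ for any $k$.

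The repair is exactly what the paper does and costs almost nothing once you have the right picture. Work with $I_1+I_2$ in $S=K[x_{i_1},\ldots,x_{i_r}]$; the variable sets of $I_1$ and $I_2$ are disjoint, so Lemma~\ref{disjoint} applies and gives $\mathfrak{p}_1\in{\rm Ass}(S_1/I_1^{k_1})$, $\mathfrak{p}_2\in{\rm Ass}(S_2/I_2^{k_2})$ with $(k_1-1)+(k_2-1)=k-1$. Since $I_1$ is itself the maximal ideal of $S_1$, one has $\mathfrak{p}_1=I_1\in{\rm Ass}(S_1/I_1^{k_1})$ for every $k_1\ge 1$, so that piece is automatic. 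Proposition~\ref{jan29-2011-1} is applied only to the genuine edge ideal $I_2$ (which has no isolated vertices by construction), bumping $k_2$ to $k_2+1$, and Lemma~\ref{disjoint} in the reverse direction then gives $\mathfrak{p}\in{\rm Ass}(S/(I_1+I_2)^{k+1})$, hence $\mathfrak{p}\in{\rm Ass}(R/I^{k+1})$.
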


\begin{proof} Recall that we are assuming that $G$ has no isolated vertices. 
Let $\mathfrak{p}$ be an associated prime of 
$R/I^k$ and let $\mathfrak{m}=(x_1,\ldots,x_n)$ be the irrelevant maximal ideal
of $R$. For simplicity of notation we may assume that
$\mathfrak{p}=(x_1,\ldots,x_r)$. Then, the set $C=\{x_1,\ldots,x_r\}$ is a vertex cover 
of $G$. By Proposition~\ref{jan29-2011-1}, we may assume that
$\mathfrak{p}\subsetneq\mathfrak{m}$. Write 
$I_{\mathfrak p}=(I_2,I_1)_\mathfrak{p}$, where $I_2$ is the ideal of
$R$ generated by all square-free monomials of degree two $x_ix_j$ whose image,
under the canonical map $R\rightarrow R_\mathfrak{p}$, 
is a minimal generator of $I_{\mathfrak p}$, and $I_1$ is the prime ideal
of $R$ generated by all variables $x_i$ whose image is a minimal generator of
$I_{\mathfrak p}$, which correspond to the isolated vertices
of the graph associated to $I_{\mathfrak p}$. The minimal generators
of $I_2$ and $I_1$ lie in $S=K[x_1,\ldots,x_r]$, and the two sets of
variables occurring in
the minimal generating sets of $I_1$ and $I_2$ (respectively) are
disjoint and their union 
is $C=\{x_1,\ldots,x_r\}$. If $I_2=(0)$, then
${\mathfrak p}$ is a minimal prime of $I$ so it is an associated
prime of $R/I^{k+1}$. Thus, we may assume $I_2\neq(0)$. An important fact is that
localization 
preserves associated primes, that is $\mathfrak{p}\in{\rm Ass}(R/I^k)$ if and only if
$\mathfrak{p}R_\mathfrak{p}\in{\rm
Ass}(R_\mathfrak{p}/(I_\mathfrak{p}R_\mathfrak{p})^k)$, see
\cite[p.~38]{Mats}. Hence, 
$\mathfrak{p}$ is in ${\rm Ass}(R/I^k)$ if and only if $\mathfrak{p}$
is in ${\rm Ass}(R/(I_1,I_2)^k)$ if and only if $\mathfrak{p}$ is 
in ${\rm Ass}(S/(I_1,I_2)^k)$. By Proposition~\ref{jan29-2011-1} and
Lemma~\ref{disjoint}, $\mathfrak{p}$ is an associated prime of
$S/(I_1,I_2)^{k+1}$. Hence, we can argue backwards to conclude that
$\mathfrak{p}$ is an associated prime of $R/I^{k+1}$. 
\end{proof}

\begin{remark} Using Proposition~\ref{jan29-2011-1} and
Lemma~\ref{disjoint}, this result can also be shown by induction 
on the number of variables because 
localizing at $\mathfrak{p}\subsetneq\mathfrak{m}$ yields the ideal
$(I_1,I_2)$ in a polynomial ring with fewer than $n$ variables. Using
induction may be useful to extend
Theorem~\ref{persistence-edge-ideals} to other classes of monomial 
ideals (for instance to edge ideals of clutters, see \cite{symboli}). In the case where
the conclusion of Lemma~\ref{jan29-2011} holds for a class of
square-free monomial ideals, this result immediately extends.  
\end{remark}

\begin{corollary}\label{square-free-chain}
Let $I$ be a square-free monomial ideal and suppose $(I^{k+1} \colon
I)=I^k$ for $k \geq 1$. Then the sets of associated primes of the
powers of $I$ form an ascending chain.  
\end{corollary}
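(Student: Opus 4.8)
The plan is to mimic the proof of Theorem~\ref{persistence-edge-ideals} almost verbatim, replacing each invocation of the edge-ideal-specific input (namely that $(I^{k+1}\colon I)=I^k$, which was Lemma~\ref{jan29-2011}) with the hypothesis now assumed. First I would establish the analogue of Proposition~\ref{jan29-2011-1}: if $\mathfrak m=(x_1,\dots,x_n)$ is the irrelevant maximal ideal and $\mathfrak m\in{\rm Ass}(R/I^k)$, then $\mathfrak m\in{\rm Ass}(R/I^{k+1})$. Indeed, from $\mathfrak m\in{\rm Ass}(R/I^k)$ one gets a monomial $x^a\notin I^k$ with $\mathfrak m x^a\subseteq I^k$; in particular $x^a\in(I^{k}\colon x_\ell)$ for every $\ell$, hence $x^a\notin I^k=(I^{k+1}\colon I)$ forces $I x^a\not\subseteq I^{k+1}$, so there is a generator $m$ of $I$ with $m x^a\notin I^{k+1}$. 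Then $x_\ell(m x^a)\in I^{k+1}$ for all $\ell$ (because $x_\ell x^a\in I^k$ already, so $x_\ell m x^a\in I\cdot I^k=I^{k+1}$), giving $\mathfrak m=(I^{k+1}\colon m x^a)\in{\rm Ass}(R/I^{k+1})$.

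Next I would reduce the general associated prime to the maximal-ideal case by localization, exactly as in the proof of Theorem~\ref{persistence-edge-ideals}. Let $\mathfrak p\in{\rm Ass}(R/I^k)$; may assume $\mathfrak p=(x_1,\dots,x_r)$ and, by the paragraph above, $\mathfrak p\subsetneq\mathfrak m$. Localizing, $I_{\mathfrak p}$ is again a square-free monomial ideal, which after depolarization/cleanup is generated in $S=K[x_1,\dots,x_r]$ and splits as $(I_1,I_2)$ with $I_1$ generated by a disjoint set of variables (the ``isolated vertices'') and $I_2$ the remaining square-free part, the two variable sets partitioning $\{x_1,\dots,x_r\}$. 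Since $\mathfrak pR_{\mathfrak p}\in{\rm Ass}(R_{\mathfrak p}/(I_{\mathfrak p})^k)$ iff $\mathfrak p\in{\rm Ass}(S/(I_1,I_2)^k)$, and $\mathfrak p$ is the irrelevant maximal ideal of $S$, I can apply the maximal-ideal step to the ideal $(I_1,I_2)$ in $S$ — \emph{provided} the colon hypothesis is inherited by $(I_1,I_2)$. This is the one genuine point to check: $I_1$ is generated by variables not appearing in $I_2$, so $((I_1,I_2)^{k+1}\colon(I_1,I_2))$ decomposes via Lemma~\ref{disjoint}-type bookkeeping, and $(I_2^{k+1}\colon I_2)=I_2^k$ follows because $I_2$ is (a saturation/localization of) the original $I$ restricted to a subset of variables; one may either verify directly that the colon property localizes, or simply note that passing to $I_{\mathfrak p}$ and then depolarizing preserves the equality $(J^{k+1}\colon J)=J^k$ for square-free monomial ideals, since colons, powers, and localization all commute with polarization. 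Granting this, the maximal-ideal step gives $\mathfrak p\in{\rm Ass}(S/(I_1,I_2)^{k+1})$, and arguing backwards through the localization and the splitting (here Lemma~\ref{disjoint}) yields $\mathfrak p\in{\rm Ass}(R/I^{k+1})$.

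The main obstacle, as indicated, is confirming that the colon identity $(I^{k+1}\colon I)=I^k$ descends to the localized-and-cleaned ideal $(I_1,I_2)$ in $S$; everything else is a transcription of the existing argument. The cleanest route is the remark already in the excerpt following Theorem~\ref{persistence-edge-ideals}: the whole persistence argument is really an induction on the number of variables, and ``in the case where the conclusion of Lemma~\ref{jan29-2011} holds for a class of square-free monomial ideals, this result immediately extends.'' So in practice the proof is one sentence: repeat the proof of Theorem~\ref{persistence-edge-ideals}, using the hypothesis $(I^{k+1}\colon I)=I^k$ in place of Lemma~\ref{jan29-2011} and observing that this hypothesis is stable under the localization/depolarization steps used there, which is what makes the induction on the number of variables go through.
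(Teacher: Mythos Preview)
Your proposal is correct and follows essentially the same route as the paper: first the maximal-ideal case via the colon hypothesis, then reduction to it by localization and Lemma~\ref{disjoint}. The one place where you make extra work for yourself is the inheritance of the colon identity under localization: the paper dispatches this in one line by observing that $(I^{k+1}\colon I)_{\mathfrak p}=(I_{\mathfrak p}^{k+1}\colon I_{\mathfrak p})$ because $I$ is finitely generated, so the hypothesis passes directly to $I_{\mathfrak p}$ without any depolarization bookkeeping or appeal to the splitting $(I_1,I_2)$.
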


\begin{proof}
As in Proposition~\ref{jan29-2011-1}, we first show that $\mf \in
{\rm Ass}(R/I^k)$ implies $\mf \in {\rm Ass}(R/I^{k+1})$. Assume $\mf
\in {\rm Ass}(R/I^k)$. Then there is a monomial $x^a \not\in I^k$
with $x_ix^a \in I^{k+1}$ for all $i$. By the hypothesis, $x^a
\not\in (I^{k+1} \colon I)$, so there is a square-free monomial
generator $e$ of $I$ (which can be viewed as the edge of a clutter
associated to $I$) with $ex^a \not\in I^{k+1}$. But
$x_iex^a=e(x_ix^a) \in I^{k+1}$ for all $i$, so $\mf \in {\rm
Ass}(R/I^{k+1})$.         

Recall that since $I$ is finitely generated, $(I^{k+1} \colon
I)_{\mathfrak p}= (I^{k+1}_{\mathfrak p} \colon
I_{\mathfrak p})$. Thus $(I^{k+1}_{\mathfrak p} \colon
I_{\mathfrak p})=I^k_{\mathfrak p}$.
The remainder of the argument now follows from localization, as in the
proof of Theorem~\ref{persistence-edge-ideals}, after noting that
Lemma~\ref{disjoint} applies to an arbitrary square-free monomial
ideal.   
\end{proof}

In \cite[Question~4.16]{edge-ideals} it was asked if the sets ${\rm
  Ass}(R/I^k)$ form an ascending chain for all square-free monomial
  ideals $I$. Corollary~\ref{square-free-chain} provides one possible
  approach for answering this question for some classes of square-free
  monomial ideals. However, this approach will not work for all
  square-free monomial ideals, as can be seen by the following example.

\begin{example}\label{ass-powers-ce} Let
  $R=\mathbb{Q}[x_1,\ldots,x_6]$ and let  
$I$ be the square-free monomial ideal 
$$
I=(x_1x_2x_5,\, x_1x_3x_4,\, x_1x_2x_6,\, x_1x_3x_6,\, x_1x_4x_5,\, 
x_2x_3x_4,\, x_2x_3x_5,\, x_2x_4x_6,\, x_3x_5x_6,\, x_4x_5x_6).
$$
Using {\it Normaliz\/} \cite{normaliz2} together with {\em Macaulay\/}$2$
\cite{mac2}, it is seen that $I$ is a non-normal ideal such that
$(I^2 : I)=I$ and $(I^3 : I)\neq I^2$. Nevertheless, it is not hard to see that the
sets of associated primes of the powers of  $I$ form an ascending
chain and that the index of stability of $I$ is equal to $3$.  
\end{example}

It is also of interest to note that
for square-free monomial ideals, knowing that the sets ${\rm
  Ass}(R/I^k)$ form an ascending chain immediately implies that the
sets ${\rm Ass}(I^{k-1}/I^{k})$ form an ascending chain as well. Thus
we get the following corollary of
Theorem~\ref{persistence-edge-ideals}. A similar corollary would
follow from Corollary~\ref{square-free-chain} as well. 

\begin{corollary} Let $I=I(G)$ be the edge ideal of a graph $G$, then 
${\rm Ass}(I^{k-1}/I^{k})$ form an ascending chain for $k\geq 1$. 
\end{corollary}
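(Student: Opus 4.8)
The plan is to prove the equivalent statement ${\rm Ass}(I^{k-1}/I^{k})\subseteq{\rm Ass}(I^{k}/I^{k+1})$ for all $k\ge 1$, where as usual $I^{0}=R$. The two ingredients are the colon identity $(I^{k+1}\colon I)=I^{k}$ from Lemma~\ref{jan29-2011} and the standard compatibility of associated primes with localization. I would run the whole argument after localizing at a fixed associated prime, so that the relevant prime becomes a maximal ideal and annihilators can be controlled exactly.

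Concretely: let $\mathfrak{p}\in{\rm Ass}(I^{k-1}/I^{k})$. Since $I^{k-1}/I^{k}$ is a finitely generated $R$-module and localization commutes with powers and quotients, $(I^{k-1}/I^{k})_{\mathfrak{p}}=I_{\mathfrak{p}}^{k-1}/I_{\mathfrak{p}}^{k}$ and $\mathfrak{p}R_{\mathfrak{p}}\in{\rm Ass}_{R_{\mathfrak{p}}}(I_{\mathfrak{p}}^{k-1}/I_{\mathfrak{p}}^{k})$ (see \cite[p.~38]{Mats}). Because $I$ is finitely generated, colons commute with localization, so Lemma~\ref{jan29-2011} localizes to $(I_{\mathfrak{p}}^{k+1}\colon I_{\mathfrak{p}})=I_{\mathfrak{p}}^{k}$. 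Now, $\mathfrak{p}R_{\mathfrak{p}}$ being maximal, there is $u\in I_{\mathfrak{p}}^{k-1}\setminus I_{\mathfrak{p}}^{k}$ with $(I_{\mathfrak{p}}^{k}\colon u)=\mathfrak{p}R_{\mathfrak{p}}$. Since $u\notin I_{\mathfrak{p}}^{k}=(I_{\mathfrak{p}}^{k+1}\colon I_{\mathfrak{p}})$, some $g\in I_{\mathfrak{p}}$ satisfies $gu\notin I_{\mathfrak{p}}^{k+1}$. Set $v=gu$. Then $v\in I_{\mathfrak{p}}\,I_{\mathfrak{p}}^{k-1}=I_{\mathfrak{p}}^{k}$ and $v\notin I_{\mathfrak{p}}^{k+1}$, while $(\mathfrak{p}R_{\mathfrak{p}})v=g(\mathfrak{p}R_{\mathfrak{p}})u\subseteq gI_{\mathfrak{p}}^{k}\subseteq I_{\mathfrak{p}}^{k+1}$. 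Hence $(I_{\mathfrak{p}}^{k+1}\colon v)$ is a proper ideal containing the maximal ideal $\mathfrak{p}R_{\mathfrak{p}}$, so it equals $\mathfrak{p}R_{\mathfrak{p}}$, giving $\mathfrak{p}R_{\mathfrak{p}}\in{\rm Ass}_{R_{\mathfrak{p}}}(I_{\mathfrak{p}}^{k}/I_{\mathfrak{p}}^{k+1})$. Since this module is $(I^{k}/I^{k+1})_{\mathfrak{p}}$, localizing back yields $\mathfrak{p}\in{\rm Ass}(I^{k}/I^{k+1})$.

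The point I expect to need the most care is the exactness of the annihilator: over $R$ itself, replacing the witness $u$ by $gu$ could enlarge its annihilator past $\mathfrak{p}$, so it is essential to localize \emph{first} and use that in a local ring a proper ideal containing the maximal ideal is the maximal ideal; the rest is routine bookkeeping with localization. As an alternative packaging, one can first establish ${\rm Ass}(I^{k-1}/I^{k})={\rm Ass}(R/I^{k})$ for all $k$ --- the inclusion ``$\subseteq$'' is immediate from the short exact sequence $0\to I^{k-1}/I^{k}\to R/I^{k}\to R/I^{k-1}\to 0$, and ``$\supseteq$'' follows by the same localize-and-multiply-by-$g$ trick applied to a witness for $R/I^{k}$ --- and then the ascending chain for ${\rm Ass}(I^{k-1}/I^{k})$ is immediate from Theorem~\ref{persistence-edge-ideals}.
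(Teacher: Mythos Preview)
Your argument is correct. The paper's own proof is a one-liner: it cites \cite[Lemma~4.4]{edge-ideals} (which, for square-free monomial ideals, identifies ${\rm Ass}(I^{k-1}/I^{k})$ with ${\rm Ass}(R/I^{k})$) and then invokes Theorem~\ref{persistence-edge-ideals}. Your ``alternative packaging'' at the end is exactly this route, with the external lemma re-derived by the localize-and-multiply trick.

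Your primary argument, however, is genuinely more direct than the paper's: you use only Lemma~\ref{jan29-2011} (localized) and the maximality of $\mathfrak{p}R_{\mathfrak{p}}$, and you never appeal to Theorem~\ref{persistence-edge-ideals} or to the external reference. In effect you are reproving the persistence mechanism at the level of $I^{k-1}/I^{k}$ rather than of $R/I^{k}$. This buys you a self-contained proof from the colon identity alone, and the same argument would give the corollary for any ideal satisfying $(I^{k+1}\colon I)=I^{k}$, in line with the spirit of Corollary~\ref{square-free-chain}. The paper's route, on the other hand, cleanly separates the two ingredients (the equality ${\rm Ass}(I^{k-1}/I^{k})={\rm Ass}(R/I^{k})$, which holds for square-free monomial ideals regardless of the colon identity, and the ascending-chain theorem), which makes the logical dependencies more visible. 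Your care about localizing first so that the annihilator cannot overshoot $\mathfrak{p}$ is exactly the point that makes the direct argument work.
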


\begin{proof} It follows from \cite[Lemma~4.4 ]{edge-ideals} and
Theorem~\ref{persistence-edge-ideals}. 
\end{proof}

\section{Integral Closures and Stable Sets}

As mentioned in the Introduction, the sets of associated primes of the
integral closures of the powers of $I$ are also known to form an
ascending chain that stabilizes. In order to compare the stable sets
of the two chains ${\rm Ass}(R/I^k)$ and  ${\rm
  Ass}(R/\overline{I^k})$, we recall the following definition and
lemma.  

\begin{definition}\rm Let $I=(x^{v_1},\ldots,x^{v_q})$ be a monomial
ideal of $R$. The {\it Rees algebra\/} of $I$, denoted by $R[It]$, is
the monomial subring
$$
R[It]=R[x^{v_1}t,\ldots,x^{v_q}t]\subset R[t].
$$
The ring ${\mathcal F}(I)=R[It]/{\mathfrak
m}R[It]$ is called the 
{\it special fiber ring\/} of $I$. The Krull dimension of ${\mathcal
F}(I)$, denoted by $\ell(I)$, is called the {\it analytic spread\/} 
of $I$.
\end{definition}

\begin{lemma}{\cite[Proposition~7.1.17,
Exercise~7.4.10]{monalg}}\label{analytic-spread-formula} 
Let $I=(x^{v_1},\ldots,x^{v_q})$ be a monomial ideal and let $A$ be
the matrix with column vectors $v_1,\ldots,v_q$. If $\deg(x^{v_i})=d$ for all $i$, then 
$$
\mathcal{F}(I)\simeq K[x^{v_1}t,\ldots,x^{v_q}t]\simeq K[x^{v_1},\ldots,x^{v_q}]\ 
\mbox{ and }\ \ell(I)=\dim\,
K[x^{v_1},\ldots,x^{v_q}]={\rm rank}(A).
$$ 
\end{lemma}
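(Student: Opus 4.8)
This is a standard fact about Rees algebras of equigenerated monomial ideals; here is the line of argument I would give. The plan is to compute $\mathcal{F}(I)$ directly from the $\mathbb{N}$-grading of the Rees algebra. Writing $R[It]=\bigoplus_{i\ge 0}I^it^i\subset R[t]$ with $I^0=R$ and grading by the exponent of $t$, one has $\mathfrak{m}R[It]=\bigoplus_{i\ge 0}\mathfrak{m}I^it^i$, so $\mathcal{F}(I)=\bigoplus_{i\ge 0}I^i/\mathfrak{m}I^i$ is an $\mathbb{N}$-graded $K$-algebra generated in degree $1$ by the images of $x^{v_1}t,\ldots,x^{v_q}t$. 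First I would show that the composite
$$\varphi\colon K[x^{v_1}t,\ldots,x^{v_q}t]\hookrightarrow R[It]\twoheadrightarrow \mathcal{F}(I)$$
is an isomorphism. Surjectivity is immediate from the description of the generators. For injectivity, decompose an element of the subring into its $t$-homogeneous parts; a $t$-homogeneous element of degree $m$ has the form $ht^m$ with $h\in I^m\subset R$ homogeneous of degree $md$, because every generator $x^{v_i}$ of $I$ has degree $d$. If $\varphi(ht^m)=0$ then $h\in\mathfrak{m}I^m$; but $I^m$ is generated by monomials of degree $md$, so $\mathfrak{m}I^m$ is generated in degree $md+1$ and has no nonzero element of degree $md$, forcing $h=0$.

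Next I would identify $K[x^{v_1}t,\ldots,x^{v_q}t]$ with $K[x^{v_1},\ldots,x^{v_q}]$ via $x^{v_i}t\mapsto x^{v_i}$. This assignment extends to a surjective $K$-algebra map of monomial subalgebras, and it is injective because a monomial relation $\prod_i(x^{v_i}t)^{\alpha_i}=\prod_i(x^{v_i}t)^{\beta_i}$ in $R[t]$ amounts to the two conditions $\sum_i\alpha_iv_i=\sum_i\beta_iv_i$ and $\sum_i\alpha_i=\sum_i\beta_i$, and taking total $x$-degrees shows the first condition already implies the second once all the $x^{v_i}$ have the same degree $d$; hence the defining (toric) relations of the two subalgebras coincide. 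This is exactly the step that uses the hypothesis $\deg(x^{v_i})=d$ for all $i$, and I expect it, together with the injectivity of $\varphi$, to be the only places that require care.

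Finally I would compute the Krull dimension. Since $K[x^{v_1},\ldots,x^{v_q}]$ is an affine domain over $K$, its dimension equals its transcendence degree over $K$, i.e.\ the largest size of an algebraically independent subset of $\{x^{v_1},\ldots,x^{v_q}\}$. A subfamily $x^{v_{i_1}},\ldots,x^{v_{i_r}}$ is algebraically independent over $K$ precisely when $v_{i_1},\ldots,v_{i_r}$ are $\mathbb{Q}$-linearly independent, since any algebraic relation among Laurent monomials reduces to a linear relation among their exponent vectors. Hence $\dim K[x^{v_1},\ldots,x^{v_q}]=\mathrm{rank}(A)$. Combining the three steps with the definition $\ell(I)=\dim\mathcal{F}(I)$ gives $\mathcal{F}(I)\simeq K[x^{v_1}t,\ldots,x^{v_q}t]\simeq K[x^{v_1},\ldots,x^{v_q}]$ and $\ell(I)=\mathrm{rank}(A)$, as claimed.
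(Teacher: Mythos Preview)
Your argument is correct and is essentially the standard proof of this fact. Note, however, that the paper does not actually prove this lemma: it is simply quoted from \cite[Proposition~7.1.17, Exercise~7.4.10]{monalg} and used as a black box. So there is no ``paper's own proof'' to compare against here; you have supplied a clean self-contained justification where the authors chose to give only a reference.
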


Once again, localization will allow us to reduce to the case of the
maximal ideal. To that end, we prove a result that characterizes when
$\mathfrak{m}$ is in the stable sets of  ${\rm Ass}(R/I^k)$ and  ${\rm
  Ass}(R/\overline{I^k})$ . 

\begin{proposition}\label{feb1-2011} Let $G$ be a graph. The
following are equivalent\/{\rm :}
\begin{itemize}
\item[(a)] $\mathfrak{m}\in {\rm Ass}(R/I(G)^k)$ for some $k$.
\item[(b)] The connected components of $G$ are non-bipartite.
\item[(c)] $\mathfrak{m}\in{\rm Ass}(R/\overline{I(G)^{t}})$ for some $t$.
\item[(d)] ${\rm rank}(A)=n$, where $A$ is the incidence matrix of $G$
and $n=|V(G)|$.
\end{itemize}
\end{proposition}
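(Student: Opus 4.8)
The plan is to prove the cycle of implications $(a)\Rightarrow(b)\Rightarrow(d)\Rightarrow(c)\Rightarrow(a)$, reducing everything to the case of a connected graph and invoking the analytic-spread machinery of Lemma~\ref{analytic-spread-formula}. First I would observe that all four statements decompose over connected components: if $G=G_1\cup\cdots\cup G_s$ with the $G_i$ on disjoint vertex sets, then $I(G)=(I(G_1),\ldots,I(G_s))$, so by Lemma~\ref{disjoint} (in the multi-component form stated after it) $\mathfrak{m}\in{\rm Ass}(R/I(G)^k)$ for some $k$ if and only if the analogous maximal ideal is associated to a power of each $I(G_i)$; condition (b) is literally componentwise; the incidence matrix $A$ is block-diagonal so ${\rm rank}(A)=n$ iff each block has full rank; and (c) decomposes the same way since integral closure commutes with this kind of decomposition. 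So I may and will assume $G$ is connected.

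For a connected graph, the key input is the well-known description of the edge-subring $K[G]$: $\dim K[G]={\rm rank}(A)$ equals $n$ if $G$ is non-bipartite and $n-1$ if $G$ is bipartite (this is \cite[Corollary~8.2.13 or similar]{monalg}, and is exactly ${\rm rank}$ of the incidence matrix). Combined with Lemma~\ref{analytic-spread-formula} applied to $I=I(G)$ (whose generators all have degree $2$), this gives $\ell(I)={\rm rank}(A)$, which is $n$ iff $G$ is non-bipartite. This immediately yields the equivalence $(b)\Leftrightarrow(d)$. For $(d)\Rightarrow(a)$ and $(a)\Rightarrow(d)$ I would use the standard fact that $\mathfrak{m}\in{\rm Ass}(R/I^k)$ for some $k$ if and only if $\ell(I)=n$, i.e. $\ell(I)=\dim R$: one direction is Burch's inequality together with the fact that $\mathfrak{m}$ being associated to all high powers forces $\operatorname{depth}R/I^k=0$ and hence $\ell(I)=n$; the other uses that when $\ell(I)$ is maximal, $\mathfrak{m}$ is an associated prime of $\overline{I^k}$ for large $k$ (by the description in \cite{mcadam}) and then a persistence/linearity argument puts it in ${\rm Ass}(R/I^k)$. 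The cleanest route is actually to prove $(d)\Rightarrow(c)$ and $(c)\Rightarrow(d)$ directly: by \cite{mcadam}, $\mathfrak{m}\in{\rm Ass}(R/\overline{I^t})$ for $t\gg0$ precisely when $\ell(I)=n$; and since ${\rm Ass}(R/I^k)\subseteq{\rm Ass}(R/\overline{I^k})$ in general, $(a)\Rightarrow(c)$ is free, while $(c)$ together with the McAdam description gives $\ell(I)=n$, i.e. $(d)$, and $(d)\Rightarrow(a)$ follows since for edge ideals the analytic spread being $n$ forces $\mathfrak{m}$ into the stable set (using that $I$ is generated in a single degree and the fiber cone has dimension $n$, so $\operatorname{depth}R/I^k=0$ for $k\gg0$ by the Burch-type equality $\min_k\operatorname{depth}R/I^k=n-\ell(I)$ which holds because the Rees algebra of a monomial ideal generated in one degree is the relevant setting).

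I would organize the final writeup as: $(a)\Rightarrow(c)$ is immediate from ${\rm Ass}(R/I^k)\subseteq{\rm Ass}(R/\overline{I^k})$; $(c)\Leftrightarrow(d)$ from McAdam's description of the stable set of ${\rm Ass}(R/\overline{I^t})$ for monomial ideals, which identifies $\mathfrak{m}$ as belonging to it exactly when $\ell(I)=\dim R$, combined with Lemma~\ref{analytic-spread-formula} giving $\ell(I)={\rm rank}(A)$; $(d)\Leftrightarrow(b)$ from the rank formula for the incidence matrix of a connected graph; and $(b)\Rightarrow(a)$, or equivalently $(d)\Rightarrow(a)$, by the Burch-type equality $\min_{k\ge1}\operatorname{depth}(R/I^k)=\dim R-\ell(I)$, which gives $\operatorname{depth}(R/I^k)=0$ for some $k$ when $\ell(I)=n$, i.e. $\mathfrak{m}\in{\rm Ass}(R/I^k)$.

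The main obstacle will be pinning down exactly which equivalence needs the least external input so that the argument stays self-contained relative to what the excerpt allows. The delicate link is $(d)\Rightarrow(a)$: one must convert "$\ell(I)$ is maximal" into "$\mathfrak{m}$ is genuinely an associated prime of some ordinary (not integral-closure) power." Over a field this is handled by the Burch inequality being an equality for ideals generated in a single degree — or, failing a citation, by first placing $\mathfrak{m}$ in ${\rm Ass}(R/\overline{I^k})$ via McAdam and then transferring to ${\rm Ass}(R/I^k)$ using the normality of $K[G]$ when $G$ is non-bipartite (a theorem of the third author), which makes the Rees-algebra computations line up. I would use the normality of the edge subring of a non-bipartite connected graph as the cleanest bridge, since it directly forces the fiber cone to have dimension $n$ and lets one exhibit a monomial witness $x^a$ with $\mathfrak{m}x^a\subseteq I^k$, $x^a\notin I^k$ for suitable $k$ via the deficiency computations already developed in Section~2.
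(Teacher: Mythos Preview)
Your equivalences $(c)\Leftrightarrow(d)$ and $(b)\Leftrightarrow(d)$ are the same as the paper's: McAdam's criterion combined with Lemma~\ref{analytic-spread-formula} for the first, and the rank formula for the incidence matrix of a connected graph for the second. Your $(d)\Rightarrow(a)$ via Brodmann's inequality on the asymptotic depth is a legitimate alternative to the paper's citation of \cite[Corollary~3.4]{AJ}; you only need the inequality $\lim_k\operatorname{depth}(R/I^k)\leq\dim R-\ell(I)$, not the ``equality'' you state (and the equality is false in general).

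The genuine gap is in closing the cycle out of $(a)$. Your step $(a)\Rightarrow(c)$ rests on the claim that ${\rm Ass}(R/I^k)\subseteq{\rm Ass}(R/\overline{I^k})$, but this inclusion is \emph{false}, even for edge ideals: Example~\ref{intcl1.m2} in this very paper exhibits a prime in ${\rm Ass}(R/I^4)$ that is not in ${\rm Ass}(R/\overline{I^4})$. Nor does your fallback $(a)\Rightarrow(d)$ work: Brodmann's inequality reads $\ell(I)\leq\dim R-\operatorname{depth}_\infty$, so asymptotic depth~$0$ only yields the trivial bound $\ell(I)\leq n$. Thus neither route gets you from $(a)$ to anything else.

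The paper proves $(a)\Rightarrow(b)$ directly, and this is the missing idea: decompose $\mathfrak{m}$ over the connected components via Lemma~\ref{disjoint}, and then invoke the theorem of \cite{ITG} that the edge ideal of a bipartite graph is normally torsion-free. If some component $G_i$ were bipartite, all associated primes of every power of $I(G_i)$ would be minimal primes of $I(G_i)$; but $\mathfrak{m}_i$ is never a minimal vertex cover of a graph with at least one edge, a contradiction. Your proposal never mentions the normally-torsion-free input, and without it (or something equivalent) the implication out of $(a)$ cannot be established.
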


\begin{proof} The equivalence between (c) and (d) follows from
\cite[Theorem~3]{mcadam} because the analytic spread of $I$ is equal
to the rank of $A$, see Lemma~\ref{analytic-spread-formula}. The
equivalence between (b) and (d) follows from the fact that 
${\rm rank}(A)=|V(G)|$ if $G$ is a connected non-bipartite graph and 
${\rm rank}(A)=|V(G)|-1$ if $G$ is a connected bipartite graph, see
\cite[Lemma~8.3.2]{monalg}. 

Let $G_1,\ldots,G_r$ be the connected components of $G$. We set
$S_i=K[V(G_i)]$ and $\mathfrak{m}_i=(V(G_i))$. Assume that
$\mathfrak{m}=(\mathfrak{m}_1,\ldots,\mathfrak{m}_r)$ is an associated
prime of $R/I(G)^k$ for some $k$. Then, by Lemma~\ref{disjoint}, 
there are positive integers $k_i$ such that $\mathfrak{m}_i$ is an
associated prime of $S_i/I(G_i)^{k_i}$. Therefore $G_i$ is
non-bipartite for all $i$ because edge ideals of bipartite graphs are
normally torsion-free \cite{ITG}. This proves that (a) implies (b).
Finally we prove that (b) implies (a). 
Assume that $G_i$ is non-bipartite for all $i$. Then, by
\cite[Corollary~3.4]{AJ}, $\mathfrak{m}_i\in {\rm
Ass}(S_i/I(G_i)^{k_i})$ for $k_i\gg 0$. Then,  
again by Lemma~\ref{disjoint}, it follows that $\mathfrak{m}$ is an
associated prime of $R/I(G)^k$ for some $k$.
\end{proof}

Combining Lemma~\ref{analytic-spread-formula} with
Proposition~\ref{feb1-2011}(d) illustrates the importance of the
analytic spread in determining associated primes. Localizing will
allow the use of these results for primes other than $\mathfrak{m}$,
but this will require control over the analytic spread of
the edge ideal of a disconnected graph.

\begin{lemma}\label{analytic-spread-additive} Let $L_1,L_2$ be
monomial ideals with disjoint sets of 
variables. If $L_1,L_2$ are gene\-rated by monomials of degrees $d_1$
and $d_2$ respectively, then $\ell(L_1+L_2)=\ell(L_1)+\ell(L_2)$.
\end{lemma}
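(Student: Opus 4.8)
The plan is to compute $\ell(L_1+L_2)$ as the Krull dimension of the special fiber ring $\mathcal{F}(L_1+L_2)$, and to show that, because the two variable sets are disjoint and each $L_t$ is generated in a single degree, this fiber ring decomposes as a tensor product $\mathcal{F}(L_1)\otimes_K\mathcal{F}(L_2)$; since Krull dimension is additive over tensor products of nonzero affine $K$-algebras, this gives $\ell(L_1+L_2)=\dim\mathcal{F}(L_1)+\dim\mathcal{F}(L_2)=\ell(L_1)+\ell(L_2)$. To set up, write $L_1=(x^{u_1},\dots,x^{u_p})\subseteq S_1=K[x_1,\dots,x_m]$ and $L_2=(x^{w_1},\dots,x^{w_s})\subseteq S_2=K[x_{m+1},\dots,x_r]$; we may assume these generating sets are minimal, and then the $u_i$ still all have degree $d_1$ and the $w_j$ all have degree $d_2$. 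The technical heart is a description of the minimal generators of the powers of $L_1+L_2$: a monomial $M$ is a minimal generator of $(L_1+L_2)^k$ if and only if $M=gh$ where $g$ is a minimal generator of $L_1^i$, $h$ is a minimal generator of $L_2^j$, and $i+j=k$; here $g$ and $h$ are the $S_1$-part and the $S_2$-part of $M$, and $i=\deg(g)/d_1$ and $j=\deg(h)/d_2$, so the decomposition of $M$ is unique.

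Granting this description, the map sending the class of $M=gh$ in $\mathcal{F}(L_1+L_2)=\bigoplus_k(L_1+L_2)^k/\mathfrak{m}(L_1+L_2)^k$ to (class of $g$)$\otimes$(class of $h$) is a bijection between the monomial bases of $\mathcal{F}(L_1+L_2)$ and of $\mathcal{F}(L_1)\otimes_K\mathcal{F}(L_2)$, and it is multiplicative: a product of basis monomials $(g_1h_1)(g_2h_2)=(g_1g_2)(h_1h_2)$ becomes zero in $\mathcal{F}(L_1+L_2)$ under exactly the condition under which $(g_1g_2)\otimes(h_1h_2)$ becomes zero in $\mathcal{F}(L_1)\otimes_K\mathcal{F}(L_2)$, namely that $g_1g_2$ is not a minimal generator of $L_1^{i_1+i_2}$ or $h_1h_2$ is not a minimal generator of $L_2^{j_1+j_2}$. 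Hence $\mathcal{F}(L_1+L_2)\cong\mathcal{F}(L_1)\otimes_K\mathcal{F}(L_2)$ as $K$-algebras, and the lemma follows. One may also finish numerically: the description gives $\mu\big((L_1+L_2)^k\big)=\sum_{i+j=k}\mu(L_1^i)\mu(L_2^j)$, and since $\mu(L_t^k)$ is the Hilbert function of the standard graded algebra $\mathcal{F}(L_t)$, it agrees for $k\gg 0$ with a polynomial of degree $\ell(L_t)-1$ having positive leading coefficient; hence this convolution is, for $k\gg 0$, a polynomial in $k$ of degree $(\ell(L_1)-1)+(\ell(L_2)-1)+1$, and since the left-hand side is the Hilbert function of $\mathcal{F}(L_1+L_2)$ we again obtain $\dim\mathcal{F}(L_1+L_2)=\ell(L_1)+\ell(L_2)$. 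Finally, in the special case $d_1=d_2$ the fiber ring can be avoided entirely: there $L_1+L_2$ is generated in a single degree, so by Lemma~\ref{analytic-spread-formula} its analytic spread is the rank of the matrix whose columns are the $u_i$ and the $w_j$, each padded with zeros to a vector in $\mathbb{N}^r$; these columns fall into two blocks supported on disjoint coordinates, so the rank is $\operatorname{rank}(A_1)+\operatorname{rank}(A_2)=\ell(L_1)+\ell(L_2)$, where $A_t$ is the exponent matrix of $L_t$ (again by Lemma~\ref{analytic-spread-formula}).

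I expect the only real obstacle to be the description of the minimal generators, and within it the forward direction: that if $g$ is a minimal generator of $L_1^i$ and $h$ a minimal generator of $L_2^j$, then $M=gh$ is a minimal generator of $(L_1+L_2)^{i+j}$, i.e.\ that $M/x_\ell\notin(L_1+L_2)^{i+j}$ for every variable $x_\ell$ dividing $M$. If $x_\ell$ is an $S_1$-variable, then $M/x_\ell=(g/x_\ell)h$, and were it in $(L_1+L_2)^{i+j}$ it would be divisible by a product $g'h'$ with $g'$ a generator of $L_1^a$, $h'$ a generator of $L_2^b$, and $a+b=i+j$; then $h'\mid h$, so $b d_2=\deg(h')\le\deg(h)=j d_2$ since $L_2$ is generated in degree $d_2$, whence $b\le j$ and $a\ge i$, which forces $g/x_\ell\in L_1^a\subseteq L_1^i$ and contradicts the minimality of $g$ in $L_1^i$; the case of an $S_2$-variable is symmetric. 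This degree comparison is precisely where the single-degree hypotheses on $L_1$ and $L_2$ enter; without them a product of minimal generators may fail to be minimal, and both the description and the statement of the lemma can break down.
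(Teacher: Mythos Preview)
Your argument is correct and takes a genuinely different route from the paper. The paper presents the Rees algebra $R[Lt]$ by its toric ideal $J$, analyzes the bidegrees of the binomials in $J$ (this is where the single-degree hypotheses enter for the authors), and concludes that $\mathcal{F}(L_1+L_2)\simeq K[g_1t,\ldots,g_rt,h_1t,\ldots,h_st]$; the analytic spread is then read off as the rank of the exponent matrix, which splits into two blocks on the disjoint variable sets. You instead work directly with minimal generators of the powers $(L_1+L_2)^k$, proving the key bijection with pairs (minimal generator of $L_1^i$, minimal generator of $L_2^j$) for $i+j=k$, and deduce the structural isomorphism $\mathcal{F}(L_1+L_2)\cong\mathcal{F}(L_1)\otimes_K\mathcal{F}(L_2)$; additivity of Krull dimension over tensor products of affine $K$-algebras then finishes. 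Your approach yields a stronger conclusion (the tensor decomposition of the fiber ring, not just equality of dimensions) and makes the role of the hypotheses more transparent: the degree comparison $b d_2\le j d_2$ in your minimality argument is exactly where generation in a single degree is needed. The paper's approach, on the other hand, packages everything into a rank computation and connects more directly to Lemma~\ref{analytic-spread-formula}; your final paragraph (the $d_1=d_2$ shortcut via block rank) is essentially the paper's endgame specialized to that case.
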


\begin{proof} We set $L=L_1+L_2$. Let $g_1,\ldots,g_r$ and
$h_1,\ldots,h_s$ be the minimal generating sets of $L_1$ and $L_2$
  respectively that consist of monomials. By hypothesis $L_1$ (resp.
$L_2$) lives in a polynomial ring $K[\mathbf{x}]$ (resp.
$K[\mathbf{y}])$, where $\mathbf{x}=\{x_1,\ldots,x_q\}$ and
$\mathbf{y}=\{y_1,\ldots,y_m\}$. We set $R=K[\mathbf{x},\mathbf{y}]$.
The special fiber ring of $L$ can be written
as
$$
\mathcal{F}(L)\simeq
K[\mathbf{x},\mathbf{y},u_1,\ldots,u_r,z_1,\ldots,z_s]/(\mathbf{x},\mathbf{y},J),
$$
where $J$ is the presentation ideal of the Rees algebra $R[Lt]$ and
$u_1,\ldots,u_r,z_1,\ldots,z_s$ is a set of new indeterminates. The
ideal $J$ is the kernel of the map
$$
K[\mathbf{x},\mathbf{y},u_1,\ldots,u_r,z_1,\ldots,z_s]\rightarrow
R[Lt],\ \ \ \ \ x_i\mapsto x_i,\, y_j\mapsto y_j,\, u_i\mapsto g_it,\, 
z_j\mapsto h_jt.
$$
Since $J$ is a toric ideal, there is a generating set of $J$
consisting of binomials of the form
$$
x^{\alpha}y^{\beta}u^{\gamma}z^\delta-x^{\alpha'}y^{\beta'}u^{\gamma'}z^{\delta'}
$$
such that $x^{\alpha}y^{\beta}g^{\gamma}h^{\delta}
t^{|\gamma|+|\delta|}=
x^{\alpha'}y^{\beta'}g^{\gamma'}h^{\delta'}t^{|\gamma'|+|\delta'|}$.
From this equation we get $x^\alpha g^\gamma=x^{\alpha'}g^{\gamma'}$, $y^\beta
h^\delta=y^{\beta'}h^{\delta'}$ and
$t^{|\gamma|+|\delta|}=t^{|\gamma'|+|\delta'|}$. Hence
$$
|\alpha|+d_1|\gamma|=|\alpha'|+d_1|\gamma'|,\ 
|\beta|+d_2|\delta|=|\beta'|+d_2|\delta'|,\
{|\gamma|+|\delta|}={|\gamma'|+|\delta'|}.
$$ 
We claim that $\deg(x^{\alpha}y^{\beta})=0$ if and only if
$\deg(x^{\alpha'}y^{\beta'})=0$. Assume that
$\deg(x^{\alpha}y^{\beta})=0$, i.e., $\alpha=\beta=0$. From the first
equality we have $|\alpha'|=d_1(|\gamma|-|\gamma'|)$. From the second
and third equality we get
$$
|\beta'|+d_2|\delta'|=d_2|\delta|=d_2({|\gamma'|+|\delta'|}-|\gamma|)\
\Rightarrow\ |\beta'|=d_2(|\gamma'|-|\gamma|).
$$
As $|\alpha'|\geq 0$ and $|\beta'|\geq 0$, we get $\gamma-\gamma'=0$.
Thus $\alpha'=\beta'=0$. This proves the claim. Therefore one has the
following simpler expression for the special fiber ring of $L$
\begin{equation}\label{feb13-11}
\mathcal{F}(L)\simeq 
K[u_1,\ldots,u_r,z_1,\ldots,z_s]/P\simeq
K[g_1t,\ldots,g_rt,h_1t,\ldots, h_st],
\end{equation}
where $P$ is the toric ideal of $K[g_1t,\ldots,g_rt,h_1t,\ldots,
h_st]$. Let $A_1$ (resp. $A_2$) be the matrix whose columns are the
exponent vectors of the monomials $g_1t,\ldots,g_rt$ (resp.
$h_1t,\ldots,h_st$) and let $A$ be the matrix whose columns are the
exponent vectors of $g_1t,\ldots,g_rt,h_1t,\ldots,h_st$. The sets of
variables $\mathbf{x}$ and $\mathbf{y}$ are disjoint. Therefore 
${\rm rank}(A)={\rm rank}(A_1)+{\rm rank}(A_2)$. Since 
$$
\mathcal{F}(L_1)=K[g_1t,\ldots,g_rt]\ \mbox{ and }\ 
\mathcal{F}(L_2)=K[h_1t,\ldots,h_st],
$$ 
using Lemma~\ref{analytic-spread-formula} and Eq.~(\ref{feb13-11}), it follows that 
$\ell(L)=\ell(L_1)+\ell(L_2)$. 
\end{proof}

\begin{remark} When $L_2$ is generated by a set of variables, which is
the case that we really need, the
lemma follows at once from \cite[Corollary~6.2, p.~43]{McAdam} because in this case the
set of variables form an asymptotic sequence over $L_1$ in the sense of
\cite{McAdam}.
\end{remark}

The sets ${\rm Ass}(R/I^i)$ and ${\rm Ass}(R/\overline{I^i})$
stabilize for large $i$. The next result shows that, for edge ideals, their
corresponding stable sets are equal. 

\begin{theorem}\label{ass=assic} Let $I$ be the edge ideal of a graph
  $G$. There exists 
a positive integer $N$ such that ${\rm Ass}(R/I^k)={\rm
Ass}(R/\overline{I^k})$ for
$k\geq N$.
\end{theorem}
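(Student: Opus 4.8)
The plan is to reduce the general case to the case of the maximal ideal via localization, exactly as in the proof of Theorem~\ref{persistence-edge-ideals}, and then to invoke Proposition~\ref{feb1-2011} at the level of each connected component. First I would set $N_1$ to be the index of stability of $I$ (so ${\rm Ass}(R/I^k)={\rm Ass}(R/I^{N_1})$ for $k\geq N_1$) and $N_2$ the analogous index for the integral closures (so ${\rm Ass}(R/\overline{I^k})={\rm Ass}(R/\overline{I^{N_2}})$ for $k\geq N_2$), both of which exist by the results cited in the Introduction. Take $N=\max\{N_1,N_2\}$. We already know from \cite[Proposition~3.17]{McAdam} that ${\rm Ass}(R/\overline{I^{N_2}})\subset {\rm Ass}(R/I^{N_1})$, so the containment ``$\supseteq$'' holds for $k\geq N$ and only the reverse containment needs proof.

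**Reducing to the maximal ideal.** Let $\mathfrak{p}\in {\rm Ass}(R/I^k)$ for some $k\geq N$; say $\mathfrak{p}=(x_1,\ldots,x_r)$. Localizing at $\mathfrak{p}$ and writing $I_\mathfrak{p}=(I_1,I_2)_\mathfrak{p}$ as in the proof of Theorem~\ref{persistence-edge-ideals}, where $I_1$ is generated by the variables corresponding to isolated vertices of the localized graph and $I_2$ is the edge ideal of a graph $H$ on the remaining variables, we have that $\mathfrak{p}$ is an associated prime of $S/(I_1,I_2)^k$ in $S=K[x_1,\ldots,x_r]$. By the disjoint-variables decomposition (Lemma~\ref{disjoint} and the remark following it), this forces $\mathfrak{m}_H:=(V(H))$ to be an associated prime of $S_H/I(H)^{k'}$ for some $k'$, where $S_H$ is the polynomial ring on $V(H)$. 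By Proposition~\ref{feb1-2011}, every connected component of $H$ is non-bipartite, which in turn says $\ell(I(H))=|V(H)|$ by Lemma~\ref{analytic-spread-additive} together with Lemma~\ref{analytic-spread-formula}. Then, again by Proposition~\ref{feb1-2011} (the equivalence of (b),(c),(d)), $\mathfrak{m}_H\in {\rm Ass}(S_H/\overline{I(H)^{t}})$ for some $t$, hence $\mathfrak{m}_H\in {\rm Ass}(S_H/\overline{I(H)^{t}})$ for all $t\gg 0$ by Ratliff's stability; combined with the disjoint-variables statement for integral closures, $\mathfrak{p}\in {\rm Ass}(S/\overline{(I_1,I_2)^{k}})$ for $k\gg 0$.

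**Passing back up and the index.** Since $\overline{(I^{k})}_\mathfrak{p}=\overline{(I_\mathfrak{p})^k}$ for a finitely generated ideal, localization preserves associated primes of integral closures just as for ordinary powers, so $\mathfrak{p}\in {\rm Ass}(R/\overline{I^k})$ for $k\gg 0$, hence for $k\geq N_2$ by stability, hence for $k\geq N$. This gives ${\rm Ass}(R/I^N)\subset {\rm Ass}(R/\overline{I^N})$, and together with the reverse containment and the two stabilizations, ${\rm Ass}(R/I^k)={\rm Ass}(R/\overline{I^k})$ for all $k\geq N$.

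**Main obstacle.** The delicate point is the bookkeeping with the localization splitting $I_\mathfrak{p}=(I_1,I_2)$: one must check that the component $I_1$ of pure variables contributes the same way to the associated primes of powers and of integral closures (which it does, since a prime generated by variables is already integrally closed and behaves trivially under the disjoint-union lemma), and that the ``component-wise non-bipartite'' condition extracted from Proposition~\ref{feb1-2011} is genuinely an \emph{if and only if} so that it can be run in both directions. Everything else is an assembly of already-established facts; no new combinatorics is needed beyond what Proposition~\ref{feb1-2011} and Lemma~\ref{analytic-spread-additive} provide.
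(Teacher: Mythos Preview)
Your overall architecture matches the paper's: set $N=\max\{N_1,N_2\}$, get the inclusion ``$\supset$'' from \cite[Proposition~3.17]{McAdam}, and for ``$\subset$'' localize at $\mathfrak{p}$, split $I_\mathfrak{p}$ into $(I_1,I_2)$, invoke Proposition~\ref{feb1-2011}, and push back up using that integral closure commutes with localization. That is exactly the paper's route.

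There is, however, a genuine gap at the reassembly step. You obtain $\mathfrak{m}_H\in{\rm Ass}(S_H/\overline{I(H)^t})$ and then write ``combined with the disjoint-variables statement for integral closures, $\mathfrak{p}\in {\rm Ass}(S/\overline{(I_1,I_2)^{k}})$.'' But Lemma~\ref{disjoint} is proved only for ordinary powers $I^k$, not for $\overline{I^k}$; the paper contains no analog for integral closures, and such an analog is not immediate (one would need to control how $\overline{(I_1+I_2)^k}$ decomposes along the two blocks of variables). Your ``Main obstacle'' paragraph flags this but does not actually supply an argument.

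The paper circumvents this entirely, and this is precisely the reason Lemma~\ref{analytic-spread-additive} is there. Instead of working component-wise on $I(H)$ and then reassembling, the paper computes the analytic spread of the \emph{whole} localized ideal: $\ell(I_1)=|X_1|$ since $I_1$ is generated by variables, $\ell(I_2)=|X_2|$ from Proposition~\ref{feb1-2011}(d) and Lemma~\ref{analytic-spread-formula}, and then Lemma~\ref{analytic-spread-additive} gives $\ell(I_1+I_2)=|X_1|+|X_2|={\rm ht}(\mathfrak{p})$. Now McAdam's criterion \cite[Theorem~3]{mcadam} applies directly to $I_1+I_2$ in $S$ and yields $\mathfrak{p}\in{\rm Ass}(S/\overline{(I_1+I_2)^i})$ for $i\gg 0$, with no need for a disjoint-variables lemma on the integral-closure side. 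Your use of Lemma~\ref{analytic-spread-additive} was only to add up the components of $H$; the paper uses it to add $I_1$ and $I_2$, which is the step that makes the argument go through.
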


\begin{proof} Recall that we are assuming that $G$ has no isolated vertices. By
\cite{brod}
there is a positive integer $N_1$ such that ${\rm Ass}(R/I^{N_1})={\rm
  Ass}(R/I^k)$ for $k \geq N_1$, and by \cite{ratliff,ratliff-increasing}, 
there is a positive integer $N_2$ such that ${\rm
Ass}(R/\overline{I^{N_2}})={\rm Ass}(R/\overline{I^{k}})$ 
for $k\geq N_2$. Let $N={\rm max}\{N_1,N_2\}$, and assume that $k\geq
N$. First we show the 
inclusion ``$\subset$''. Take $\mathfrak{p}$ in
${\rm Ass}(R/I^k)$. 

Case (I): $\mathfrak{p}=\mathfrak{m}$. By 
Proposition~\ref{feb1-2011}, $\mathfrak{p}\in{\rm
Ass}(R/\overline{I^i})$ for some $i$. Hence $\mathfrak{p}\in{\rm
Ass}(R/\overline{I^k})$ because the sets ${\rm
Ass}(R/\overline{I^j})$ form an ascending sequence, see
\cite[Proposition~3.4, p.~13]{McAdam}.

Case (II): $\mathfrak{p}=(x_1,\ldots,x_r)\subsetneq\mathfrak{m}$. Let
$I_1$, $I_2$, and $S$ be as in the proof of
Theorem~\ref{persistence-edge-ideals} and let $X_i$ be the set of
variables that occur in the minimal generating set of $I_i$. Notice
that $\mathfrak{p}=(X_1,X_2)$. As $\mathfrak{p}$ is an associated
prime of $S/(I_1+I_2)^{k}$, applying Lemma~\ref{disjoint} to
$I_1S+I_2S$, where we regard $I_i$  
as an ideal of $S_i=K[X_i]$, we can write ${\mathfrak p}={\mathfrak
p}_1S + {\mathfrak p}_2S$, where ${\mathfrak
  p}_1 \in {\rm Ass} (S_1/I_1^{k_1})$ and ${\mathfrak p}_2 \in 
{\rm Ass} (S_2/I_2^{k_2})$, with $(k_1-1) + (k_2-1) = k-1$. Notice 
that $\mathfrak{p}_i=(X_i)$. Thus, applying
Proposition~\ref{feb1-2011} to the graph $G_2$ associated to
$I_2$, we get that the rank of the incidence matrix $A_{G_2}$ of
$G_2$ is $|X_2|$. On the other hand $\ell(I_2)$, the analytic spread
of $I_2$, is equal to the Krull dimension of the edge subring
$K[G_2]$, 
which is equal to the rank of 
$A_{G_2}$ (see Lemma~\ref{analytic-spread-formula}). Since
$I_1$ is generated by $|X_1|$ variables, one has 
$\ell(I_1)=|X_1|$. By Lemma~\ref{analytic-spread-additive}, the
analytic spread $\ell(I_1+I_2)$ is equal to 
$|X_1|+|X_2|={\rm ht}(\mathfrak{p})$. Thus, using
\cite[Theorem~3]{mcadam}, we conclude that
$\mathfrak{p}\in S/\overline{(I_1+I_2)^i}$ for $i\gg 0$. Then, 
$\mathfrak{p}R_\mathfrak{p}\in
R_\mathfrak{p}/\overline{(I_1+I_2)_\mathfrak{p}^i}=
R_\mathfrak{p}/\overline{I_\mathfrak{p}^i}$. Consequently, by
\cite[Corollary, p.~38]{Mats} and the fact that the 
integral closure of ideals commute with localizations, we get
$\mathfrak{p}\in R/\overline{I^k}$. 

The inclusion ``$\supset$'' holds for any ideal $I$ of a commutative
Noetherian ring $R$ by a result of Ratliff
\cite{ratliff,ratliff-increasing}, see \cite[Proposition~3.17]{McAdam}
for additional details.  
\end{proof}

\begin{corollary}\label{ntf-ass-corollary} Let $G$ be a graph and let $I$ be its edge ideal.
Then $I$ is normally torsion-free if and only if 
${\rm Ass}(R/\overline{I^i})={\rm Ass}(R/I)$ for $i\geq 1$.  
\end{corollary}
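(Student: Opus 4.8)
The plan is to derive the corollary formally from the stabilization results already established, with no further combinatorics. I will use: that $I$, being a square-free monomial ideal, is radical, hence integrally closed (since $\overline{I}\subseteq\sqrt{I}=I$), so $\overline{I}=I$; that the sets ${\rm Ass}(R/I^k)$ form an ascending chain stabilizing at ${\rm Ass}(R/I^{N_1})$ (Theorem~\ref{persistence-edge-ideals} and \cite{brod}); that the sets ${\rm Ass}(R/\overline{I^k})$ form an ascending chain stabilizing at ${\rm Ass}(R/\overline{I^{N_2}})$ (Ratliff \cite{ratliff,ratliff-increasing}); and the identification of the two stable sets from Theorem~\ref{ass=assic}, valid for $k\geq N:=\max\{N_1,N_2\}$. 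Everything then reduces to matching up these two chains and their stable ranges.

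For the implication ``$\Rightarrow$'', I would assume $I$ is normally torsion-free, so ${\rm Ass}(R/I^k)={\rm Ass}(R/I)$ for all $k$, and in particular ${\rm Ass}(R/I^{N_1})={\rm Ass}(R/I)$. The general containment of stable sets \cite[Proposition~3.17]{McAdam} then gives ${\rm Ass}(R/\overline{I^{N_2}})\subseteq {\rm Ass}(R/I^{N_1})={\rm Ass}(R/I)$. Since $\overline{I^1}=\overline{I}=I$ is the first term of the ascending chain ${\rm Ass}(R/\overline{I^i})$ and ${\rm Ass}(R/\overline{I^{N_2}})$ is its stable value, for every $i\geq 1$ the inclusions
\[
{\rm Ass}(R/I)={\rm Ass}(R/\overline{I})\subseteq {\rm Ass}(R/\overline{I^i})\subseteq {\rm Ass}(R/\overline{I^{N_2}})\subseteq {\rm Ass}(R/I)
\]
collapse, giving ${\rm Ass}(R/\overline{I^i})={\rm Ass}(R/I)$ for all $i\geq 1$.

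For the implication ``$\Leftarrow$'', I would assume ${\rm Ass}(R/\overline{I^i})={\rm Ass}(R/I)$ for all $i\geq 1$ and set $N=\max\{N_1,N_2\}$. By Theorem~\ref{ass=assic}, ${\rm Ass}(R/I^{N})={\rm Ass}(R/\overline{I^{N}})$, and by the hypothesis applied to $i=N$ the right side equals ${\rm Ass}(R/I)$; hence the stable set satisfies ${\rm Ass}(R/I^{N_1})={\rm Ass}(R/I^{N})={\rm Ass}(R/I)$. For $k\geq N_1$ this gives ${\rm Ass}(R/I^k)={\rm Ass}(R/I^{N_1})={\rm Ass}(R/I)$ by \cite{brod}, while for $1\leq k\leq N_1$ the ascending-chain property of Theorem~\ref{persistence-edge-ideals} yields ${\rm Ass}(R/I)\subseteq {\rm Ass}(R/I^k)\subseteq {\rm Ass}(R/I^{N_1})={\rm Ass}(R/I)$. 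Thus ${\rm Ass}(R/I^k)={\rm Ass}(R/I)$ for all $k\geq 1$, i.e., $I$ is normally torsion-free.

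I do not expect a genuine obstacle: the corollary is a bookkeeping consequence of Theorem~\ref{ass=assic} together with the two persistence statements. The only point demanding a little care is that both the hypothesis and the conclusion are quantified over all exponents $i\geq 1$, whereas Theorem~\ref{ass=assic} identifies the two chains only in their common stable range $k\geq N$; it is precisely the ascending-chain property of ${\rm Ass}(R/I^k)$ (Theorem~\ref{persistence-edge-ideals}) and of ${\rm Ass}(R/\overline{I^k})$ (Ratliff) that lets one descend from the stable range to every power, and the observation $\overline{I}=I$ that pins down the first term of the integral-closure chain.
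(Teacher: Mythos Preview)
Your proof is correct, and the ``$\Leftarrow$'' direction is essentially the paper's argument, just organized a bit differently: both push an associated prime of $R/I^k$ up the chain to the stable range via Theorem~\ref{persistence-edge-ideals}, invoke Theorem~\ref{ass=assic} there, and then use the hypothesis.

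The ``$\Rightarrow$'' direction, however, takes a genuinely different route. The paper dispatches it in one line by observing that a normally torsion-free edge ideal is \emph{normal} (i.e., $\overline{I^i}=I^i$ for all $i$), so the equality ${\rm Ass}(R/\overline{I^i})={\rm Ass}(R/I^i)={\rm Ass}(R/I)$ is immediate. This relies on the (standard, but not proved in the paper) fact that for square-free monomial ideals normally torsion-free implies normal, via $I^i\subseteq\overline{I^i}\subseteq I^{(i)}=I^i$. Your argument avoids normality altogether: you sandwich ${\rm Ass}(R/\overline{I^i})$ between ${\rm Ass}(R/\overline{I})={\rm Ass}(R/I)$ and the stable set ${\rm Ass}(R/\overline{I^{N_2}})$, then bound the latter by ${\rm Ass}(R/I^{N_1})={\rm Ass}(R/I)$ using \cite[Proposition~3.17]{McAdam}. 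This is slightly longer but is entirely self-contained within the results already quoted in the paper (Ratliff's ascending chain and the stable-set containment), and the only extra ingredient you need is the elementary observation $\overline{I}=I$ for a radical ideal. Either approach is fine; the paper's is quicker if one is willing to import normality, yours is cleaner if one wants to stay within the associated-prime formalism.
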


\begin{proof} $\Rightarrow$) This implication follows at once by
noticing that $I$ is normal, i.e., $\overline{I^i}=I^i$ for $i\geq 1$. 

$\Leftarrow$) Since ${\rm Ass}(R/I)\subset{\rm
Ass}(R/{I^i})$ for $i\geq 1$, it suffices to show that 
${\rm Ass}(R/{I^i})\subset{\rm Ass}(R/I)$ for $i\geq 1$. 
Let $\mathfrak{p}$ 
be an associated prime of $R/{I^i}$ and let $N$ be the index
of stability of $I$. Then, by Theorem~\ref{persistence-edge-ideals},
$\mathfrak{p}$ is an associated prime of $R/I^{N}$. Hence, by
Theorem~\ref{ass=assic}, $\mathfrak{p}$ is an associated prime of
$R/\overline{I^k}$ for $k\gg 0$. Thus by hypothesis $\mathfrak{p}$ is
an associated prime of $I$.
\end{proof}

\begin{procedure}\label{mac-norm-proc} The following simple procedure for {\it
Macaulay\/}$2$ (version 1.4) decides
whether ${\rm Ass}(R/I^3)$ is contained in ${\rm Ass}(R/I^4)$ and
whether we have the equality ${\rm Ass}(R/I^3)={\rm Ass}(R/I^4)$. It
also computes $\overline{I^4}$ and 
decides whether ${\rm Ass}(R/I^4)$ is equal to ${\rm
Ass}(R/\overline{I^4})$.
\begin{verbatim}
R=QQ[x1,x2,x3,x4,x5,x6,x7,x8,x9];
load "normaliz.m2";
I=monomialIdeal(x1*x2,x2*x3,x1*x3,x3*x4,x4*x5,x5*x6,x6*x7,x7*x8,x8*x9,x5*x9);
isSubset(ass(I^3),ass(I^4))
ass(I^3)==ass(I^4)
(intCl4,normRees4)=intclMonIdeal I^4;
intCl4'=substitute(intCl4,R); 
ass(monomialIdeal(intCl4'))==ass(I^4)
\end{verbatim}
\end{procedure}

The next example was computed using version 1.4 of {\em
Macaulay\/}$2$ \cite{mac2}. This version allows the use of {\it Normaliz\/}
\cite{normaliz2} inside {\em Macaulay\/}$2$ in order
to compute the integral closure of a monomial ideal and the normalization of the Rees
algebra of a monomial ideal. Example~\ref{intcl1.m2} shows that
although the stable sets of ${\rm Ass}(R/{I}^{i})$ and ${\rm
Ass}(R/\overline{I^{i}})$ are equal, they do not
need to be reached at the same power. 

\begin{example}\label{intcl1.m2} Let $R=\mathbb{Q}[x_1,\ldots,x_9]$ and let 
$I=I(G)$ be the edge ideal of the graph below (Fig. 9). Notice that
this example was computed without using Theorem~\ref{ass=assic}.
$$
\setlength{\unitlength}{.04cm} \thicklines
\begin{picture}(0,40)(40,-5)
\put(0,0){\circle*{4.2}} \put(30,0){\circle*{4.2}} \put(60,0){\circle*{4.2}}
\put(-20,20){\circle*{4.2}} \put(-20,-20){\circle*{4.2}}
\put(80,20){\circle*{4.2}}\put(80,-20){\circle*{4.2}}
\put(110,20){\circle*{4.2}}\put(110,-20){\circle*{4.2}}

\put(0,0){\line(-1,-1){20}}\put(0,0){\line(-1,1){20}}\put(0,0){\line(1,0){60}}
\put(60,0){\line(1,1){20}}
\put(60,0){\line(1,-1){20}}\put(-20,-20){\line(0,1){40}}
\put(80,20){\line(1,0){30}}\put(80,-20){\line(1,0){30}}
\put(110,-20){\line(0,1){40}}

\newcommand{\lb}[1]{\tiny $#1$}
\put(-27,20){\lb{1}} \put(-27,-20){\lb{2}} \put(-7,-2){\lb{3}}\put(30,3){\lb{4}}
\put(64,-2){\lb{5}} \put(73,20){\lb{6}}\put(115,20){\lb{7}}\put(115,-20){\lb{8}}
\put(73,-21){\lb{9}}
\put(-35,-40){Fig. 9. Graph $G$ with non-normal $I(G)$}
\end{picture}
$$

\vspace{1.5cm}

Using {\em Macaulay\/}$2$ (see Procedure~\ref{mac-norm-proc}), 
together with the fact that the index of stability of $I$ is at most
$8$ \cite[Corollary~4.3]{AJ} and 
the fact that the stable set of ${\rm Ass}(R/\overline{I^i})$ is
contained in the stable set 
of ${\rm Ass}(R/{I^i})$ \cite[Proposition~3.17]{McAdam}, we get
\begin{eqnarray*}
I^i=\overline{I^i},\ i=1,2,3,\
\overline{I^4}=I^4+(x^a),\, \overline{I^5}=I^5+I(x^a), \mbox{ where }
x^a=x_1x_2x_3x_5x_6x_7x_8x_9,&&\\ 
{\rm Ass}(R/I^i)={\rm Ass}(R/\overline{I^i})\ \mbox{for }\ i\neq 4\
\mbox{ and }\ {\rm Ass}(R/\overline{I^4})\subsetneq{\rm Ass}(R/I^4),& &\\ 
{\rm Ass}(R/I)\subsetneq{\rm Ass}(R/I^2)\subsetneq{\rm
Ass}(R/I^3)\subsetneq{\rm Ass}(R/I^4)={\rm
Ass}(R/I^i)\ \mbox{for }\ i\geq 4,& &\\
{\rm
Ass}(R/\overline{I^3})\subsetneq{\rm
Ass}(R/\overline{I^4})\subsetneq{\rm Ass}(R/\overline{I^5})={\rm
Ass}(R/\overline{I^i})\ \mbox{ for }\ i\geq 5.& &
\end{eqnarray*}
\end{example}

\medskip

\begin{center}
ACKNOWLEDGMENT
\end{center}
\noindent The authors would like to thank an anonymous referee for
providing us  
with useful comments and suggestions.

\end{document}